\newcommand{\leqnomode}{\tagsleft@true\let\veqno\@@leqno}
\newcommand{\reqnomode}{\tagsleft@false\let\veqno\@@eqno}
\begin{document}

\begin{abstract}
	
	An algebra $\mathcal{A}$ of $n\times n$ complex matrices is said to be \textit{idempotent compressible} if $E\mathcal{A}E$ is an algebra for all idempotents $E\in\mathbb{M}_n(\mathbb{C})$. Analogously, $\mathcal{A}$ is said to be \textit{projection compressible} if $P\mathcal{A}P$ is an algebra for all orthogonal projections $P$ in $\mathbb{M}_n(\mathbb{C})$. In this paper we construct several examples of unital algebras that admit these properties. In addition, a complete classification of the unital idempotent compressible subalgebras of $\mathbb{M}_3(\mathbb{C})$ is obtained up to similarity and transposition. It is shown that in this setting, the two notions of compressibility agree: a unital subalgebra of $\mathbb{M}_3(\mathbb{C})$ is projection compressible if and only if it is idempotent compressible. Our findings are extended to algebras of arbitrary size in \cite{ZCramerCompressibility}.

\end{abstract}

\title{Matrix Algebras with a Certain Compression Property I}

\thanks{${}^1$ Research supported in part by NSERC (Canada)}

\author[Z. Cramer]{{Zachary Cramer${}^1$}}

\author[L.W. Marcoux]{{Laurent W. Marcoux${}^1$}}

\author[H. Radjavi]{{Heydar Radjavi}}

\newcommand{\Addresses}{{
  \bigskip
  \footnotesize

  Zachary~Cramer, \textsc{Faculty of Mathematics, University of Waterloo,
    Waterloo, Ontario, N2L 3G1}\par\nopagebreak
  \textit{E-mail address}: \texttt{zcramer@uwaterloo.ca}

  \medskip

  Laurent~W.~Marcoux, \textsc{Department of Pure Mathematics, University of Waterloo,
    Waterloo, Ontario, N2L 3G1}\par\nopagebreak
  \textit{E-mail address}: \texttt{lwmarcoux@uwaterloo.ca}

  \medskip

  Heydar~Radjavi, \textsc{Department of Pure Mathematics, University of Waterloo,
    Waterloo, Ontario, N2L 3G1}\par\nopagebreak
  \textit{E-mail address}: \texttt{hradjavi@uwaterloo.ca}

}}

\date{\today}
\subjclass[2010]{15A30, 46H20} 
\keywords{Compression, Projection Compressibility, Idempotent Compressibility, Algebraic Corners}
\maketitle



\theoremstyle{plain}
\newtheorem{thm}{Theorem}[subsection]
\newtheorem{lem}[thm]{Lemma}
\newtheorem{cor}[thm]{Corollary}
\newtheorem{prop}[thm]{Proposition}
\newtheorem{conj}[thm]{Conjecture}
\newtheorem{exmp}[thm]{Example}
\newtheorem{rmk}[thm]{Remark}

\theoremstyle{definition}
\newtheorem{defn}[thm]{Definition}
\newtheorem{exer}[thm]{Exercise}
\newtheorem*{notation}{Notation}

\makeatletter
\@addtoreset{thm}{section}
\makeatother

\makeatletter
\def\@seccntformat#1{\@ifundefined{#1@cntformat}%
    {\csname the#1\endcsname\quad}
    {\csname #1@cntformat\endcsname}}
\newcommand{\section@cntformat}{\S\thesection\quad}
\newcommand{\subsection@cntformat}{\S\thesubsection\quad}
\makeatletter

\newcommand{\cc}{\mathbb{C}}
\newcommand{\rr}{\mathbb{R}}
\newcommand{\dd}{\mathbb{D}}
\newcommand{\cldd}{\overline{\mathbb{D}}}
\newcommand{\epsi}{\varepsilon}
\newcommand{\nn}{\mathbb{N}}
\newcommand{\zz}{\mathbb{Z}}
\newcommand{\fy}{\varphi}
\newcommand{\sign}{\text{sign}}
\newcommand{\bfs}{\textbf{S}}
\newcommand{\triv}{\textbf{1}}
\newcommand{\bb}{\textbf{B}}
\newcommand{\alga}{\mathcal{A}}
\newcommand{\hilb}{\mathcal{H}}
\newcommand{\inv}{\mathrm{GL}}
\newcommand{\nil}{\mathrm{Nil}}
\newcommand{\qnil}{\mathrm{QNil}}
\newcommand{\bh}{\mathcal{B(H)}}
\newcommand{\qh}{\mathcal{Q(H)}}
\newcommand{\ol}{\overline}
\newcommand{\mc}{\mathcal}
\newcommand{\dist}{\mathrm{dist}}
\newcommand{\nor}{\mathrm{Nor}}
\newcommand{\mm}{\mathbb{M}}
\newcommand{\au}{\sim_{au}}
\newcommand{\sorb}{\mathcal{S}}
\newcommand{\alg}{\mathrm{Alg}}
\newcommand{\bqt}{\mathrm{BQT}}
\newcommand{\anti}{\mathrm{Anti}}
\newcommand{\rad}{Rad(\mc{A})}
\newcommand{\rank}{\mathrm{rank}}
\newcommand{\ran}{\mathrm{ran}}
\newcommand\scalemath[2]{\scalebox{#1}} 
\renewcommand{\qedsymbol}{$\blacksquare$}

\newcommand\restr[2]{{
  \left.\kern-\nulldelimiterspace 
  #1 
  \vphantom{\big|} 
  \right|_{#2} 
  }}

\section{Introduction}\label{intro}

	In this paper we examine the following question: \textit{Which unital subalgebras $\mc{A}$ of $\mm_n(\cc)$ have the property that $E\mc{A}E$ is an algebra for all idempotents $E\in\mm_n(\cc)$? }
	
	Since for every idempotent $E$ one may decompose $\mc{A}$ as an algebra of block $2\times 2$ matrices with respect to the (potentially non-orthogonal) direct sum decomposition $\cc^n=\ran(E)\dotplus\ker(E)$, this question may be restated as follows: \textit{Which unital subalgebras $\mc{A}$ of $\mm_n(\cc)$ have the property that with respect to every direct sum decomposition $\cc^n=\ran(E)\dotplus\ker(E)$, the compression of $\mc{A}$ to the $(1,1)$-corner is an algebra of linear maps acting on $\ran(E)$?} This condition will be known as the \textit{idempotent compression property}. If $\mc{A}$ is a subalgebra of $\mm_n(\cc)$ for which this property holds, we say that $\mc{A}$ is \textit{idempotent compressible.}
	
	An interesting variant on the above problem arises when considering only the \textit{orthogonal} direct sum decompositions of $\cc^n$. If $\mc{A}$ is a subalgebra of $\mm_n(\cc)$ such that $P\mc{A}P$ is an algebra for all orthogonal projections $P\in\mm_n(\cc)$, we shall say that $\mc{A}$ exhibits the \textit{projection compression property} or that $\mc{A}$ is \textit{projection compressible.} It is clear that any algebra possessing the idempotent compression property must also be projection compressible.
	
	If $E\in\mm_n(\cc)$ is an idempotent, then the corner $E\mc{A}E$ is always a linear space. This means that $E\mc{A}E$ is an algebra if and only if it is multiplicatively closed. It is easy to see that this holds trivially for any idempotent from the algebra $\mc{A}$ itself. Furthermore, dimension considerations imply that this is also true for any idempotent of rank $1$. It follows that any subalgebra of $\mm_2(\cc)$ is trivially idempotent compressible, and hence projection compressible as well. Our study will therefore only concern subalgebras of $\mm_n(\cc)$ for integers $n\geq 3$. 

	While it is immediate from the definitions that every idempotent compressible algebra is also projection compressible, the converse is much less clear. As will be shown in $\S2$ and $\S3$, all of our preliminary examples indicate either the presence of the idempotent compression property or the absence of the projection compression property, thus providing evidence to the affirmative. Despite this evidence, our attempts at obtaining an intrinsic proof of the equivalence of these notions have been unsuccessful. Instead, a systematic case-by-case analysis is used to investigate whether or not such an equivalence exists. Our analysis reveals that the techniques for studying the compression properties for subalgebras of $\mm_3(\cc)$ differed significantly from those used for subalgebras of $\mm_n(\cc)$ when $n\geq 4$. For this reason, our study has been divided into two parts. 
	
	We begin our examination in \S2 by introducing the notation and basic theory surrounding these notions of compressibility. In \S3, we develop a library of algebras that admit the idempotent compression property. As we shall see, the unital idempotent compressible  algebras constructed in this section account for all idempotent compressible algebras in $\mm_3(\cc)$ up to similarity and transposition. In order to show that this is the case, we will require certain results on the structure theory for matrix algebras outlined in $\S4$. We then devote $\S5$ to the classification of unital idempotent compressible subalgebras of $\mm_3(\cc)$, ultimately proving that in this setting, the notions of projection compressibility and idempotent compressibility coincide.
	
	In \cite{ZCramerCompressibility}, the sequel to this paper, we further examine the compression properties for unital subalgebras of $\mm_n(\cc)$ when $n\geq 4$. The main result, \cite[Theorem 7.1.1]{ZCramerCompressibility}, states that the two notions of compressibility agree in this setting as well. In fact, it is shown that up to similarity and transposition, the unital algebras admitting one (and hence both) of the compression properties are exactly those outlined in \S3 of this paper. 

\section{Preliminaries}

	In this section we introduce some of the preliminary results concerning algebras that admit the idempotent or projection compression properties. Our first task will be to establish the notation and terminology that will be used throughout.
	
	 Since we will only be concerned with algebras of $n\times n$ matrices over $\cc$, we will write $\mm_n$ in place of $\mm_n(\cc)$ from here on.

We begin by investigating some permanence results for algebras admitting the idempotent or projection compression property. These facts will be used extensively without mention. The first result in this vein states that if $\mc{A}$ admits one of the compression properties, then so too does $P\mc{A}P$ for every projection $P$.

\begin{prop}
Let $\mc{A}$ be a subalgebra of $\mm_n$ that admits the idempotent (resp. projection) compression property, and let $P$ be a projection in $\mm_n$. When restricted to an algebra of linear maps acting on $\ran(P)$, the algebra $P\mc{A}P$ is idempotent (resp. projection) compressible.

\end{prop}

\begin{proof}
Assume that $\mc{A}$ is idempotent compressible. Given an idempotent $E$ acting on $\ran(P)$, we have that $PE=EP=E$. Thus, $E(P\mc{A}P)E=E\mc{A}E$ is an algebra, as $\mc{A}$ is idempotent compressible.

An analogous argument may be used in the case that $\mc{A}$ is projection compressible.
\end{proof}
\smallskip

	Note that the set of idempotents in $\mm_n$ is closed under transposition and similarity, whereas the set of projections in $\mm_n$ is closed under transposition and unitary equivalence. This leads to our second permanence property for compressible algebras, Proposition~\ref{equivalent conditions for compressibility prop}. In order to simplify the statement of this result, as well as much of the exposition in the sections to come, we first introduce the following definitions.
	
	\begin{defn}\label{transpose similarity definition}
Let $\mc{A}$ and $\mc{B}$ be subsets of $\mm_n$. Define the \textit{transpose} of $\mc{A}$ to be the set $$\mc{A}^T\coloneqq\left\{A^T:A\in\mc{A}\right\}.\smallskip$$ If $\mc{A}$ or $\mc{A}^T$ is similar to $\mc{B}$, we say that $\mc{A}$ and $\mc{B}$ are \textit{transpose similar}. If $\mc{A}$ or $\mc{A}^T$ is unitarily equivalent to $\mc{B}$, we say that $\mc{A}$ and $\mc{B}$ are \textit{transpose equivalent}.

\end{defn}

It is easy to verify that transpose similarity and transpose equivalence are equivalence relations that generalize the notions of similarity and unitary equivalence, respectively.

The proof of the following result follows immediately from the comments preceding Definition~\ref{transpose similarity definition}.
	
		\begin{prop}\label{equivalent conditions for compressibility prop}
		Let $\mc{A}$ and $\mc{B}$ be subalgebras of $\mm_n$. 
		\begin{itemize}
			\item[(i)]If $\mc{A}$ and $\mc{B}$ are transpose similar, then $\mc{A}$ is idempotent compressible if and only if $\mc{B}$ is idempotent compressible.
			
			\item[(ii)]If $\mc{A}$ and $\mc{B}$ are transpose equivalent, then $\mc{A}$ is projection compressible if and only if $\mc{B}$ is projection compressible. \smallskip
			
		\end{itemize}
	\end{prop}
	

	
	\begin{defn}
	
		Given $A\in\mm_n$, define the \textit{anti-transpose} of $A$ to be the matrix
		$$A^{aT}\coloneqq JA^TJ,$$
		where $J=J^*$ is the unitary matrix whose $(i,j)$-entry is $\delta_{j,n-i+1}$. If $\mc{A}$ is a subset of $\mm_n$, then we will define the \textit{anti-transpose} of $\mc{A}$ to be the set 
		$$\mc{A}^{aT}\coloneqq J\mc{A}^TJ=\left\{A^{aT}:A\in\mc{A}\right\}.\smallskip$$
		
	\end{defn}
	
	While transposition has the effect of reflecting a matrix about its main diagonal, anti-transposition has the effect of reflecting a matrix about its \textit{anti-diagonal} (i.e., the diagonal from the $(n,1)$-entry to the $(1,n)$-entry). 
	
	Since an algebra $\mc{A}$ and its anti-transpose $\mc{A}^{aT}$ are easily seen to be transpose equivalent, we obtain the following useful consequence of Proposition~\ref{equivalent conditions for compressibility prop}.
	
	\begin{cor}\label{anti-transpose is compressible cor}
	If $\mc{A}$ is a subalgebra of $\mm_n$, then $\mc{A}$ is idempotent (resp. projection) compressible if and only if $\mc{A}^{aT}$ is idempotent (resp. projection) compressible.
	
	\end{cor}
	
			Next we show that if an algebra $\mc{A}$ admits the idempotent (resp. projection) compression property, then so too does its unitization $\mc{A}+\cc I$. A counterexample following the proof of Corollary~\ref{LR-algebras are idempotent compressible} demonstrates that the converse is false.

	\begin{prop}\label{unitization of compressible algebra is compressible prop}
	
		If $\mc{A}$ is an idempotent (resp. a projection) compressible subalgebra of $\mm_n$, then its unitization 
		$$\tilde{\mc{A}}\coloneqq \mc{A}+\cc I\smallskip$$
		is idempotent (resp. projection) compressible. 	
	\end{prop}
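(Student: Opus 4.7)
The plan is to give a direct, one-line computation after making the key structural observation that for any idempotent $E$, the element $E$ itself serves as a multiplicative identity on the corner $E\mathcal{A}E$. Once this is noted, closure of $E\tilde{\mathcal{A}}E$ under multiplication reduces to closure of $E\mathcal{A}E$ together with the handling of scalar multiples of $E$.

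First I would fix an arbitrary idempotent $E \in \mathbb{M}_n$ (in the projection case, the same proof applies verbatim, since every orthogonal projection is in particular an idempotent, so I do not need to split the two cases). I would then expand
\[
E\tilde{\mathcal{A}}E = E(\mathcal{A} + \mathbb{C}I)E = E\mathcal{A}E + \mathbb{C}E,
\]
which is a linear space. This identifies $E\tilde{\mathcal{A}}E$ as the sum of a known algebra (by hypothesis on $\mathcal{A}$) and the one-dimensional space $\mathbb{C}E$.

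Next I would record the easy but crucial fact that any $a \in E\mathcal{A}E$ satisfies $Ea = a = aE$: writing $a = Ea'E$ with $a' \in \mathcal{A}$, one computes $aE = Ea'E^2 = Ea'E = a$ using $E^2 = E$, and similarly on the left. Using this, for $a, b \in E\mathcal{A}E$ and $\alpha, \beta \in \mathbb{C}$,
\[
(a + \alpha E)(b + \beta E) = ab + \alpha (Eb) + \beta (aE) + \alpha\beta E = ab + \alpha b + \beta a + \alpha\beta E,
\]
which lies in $E\mathcal{A}E + \mathbb{C}E$ because $E\mathcal{A}E$ is an algebra. Hence $E\tilde{\mathcal{A}}E$ is multiplicatively closed, completing the argument.

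There is no real obstacle here: the proof is essentially bookkeeping once one observes that $E$ acts as a unit on $E\mathcal{A}E$. The only thing to be careful about is not conflating the identity of $\mathbb{M}_n$ with the local identity $E$ of the corner; writing the expansion explicitly as above makes this transparent. The projection and idempotent versions are handled simultaneously because the argument uses only the idempotent relation $E^2 = E$.
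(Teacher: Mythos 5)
Your proof is correct and follows essentially the same route as the paper: expand $E\tilde{\mathcal{A}}E = E\mathcal{A}E + \mathbb{C}E$, use $E^2=E$ so that $E$ acts as a local unit on the corner, and check that the cross terms land back in $E\mathcal{A}E + \mathbb{C}E$. The paper likewise handles both the projection and idempotent cases by the same computation, so there is nothing to add.
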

	
	\begin{proof}
		
		Assume that $\mc{A}$ is idempotent (resp. projection) compressible, and let $E$ be a idempotent (resp. projection) in $\mm_n$. Let $A,B\in\mc{A}$ and $\alpha,\beta\in\cc$, so that $A+\alpha I$ and $B+\beta I$ define elements of $\tilde{\mc{A}}$. Since $EAE\cdot EBE$ belongs to $E\mc{A}E$, we can write $EAE\cdot EBE=ECE$ for some $C\in\mc{A}$. As a result,  
		\begin{align*}E(A+\alpha I)E\cdot E(B+\beta I)E&=EAE\cdot EBE+\beta EAE+\alpha EBE+\alpha\beta E\\
		&=E((C+\beta A+\alpha B)+\alpha\beta I)E
		\end{align*}
	Since $(C+\beta A+\alpha B)+\alpha\beta I$ belongs to $\widetilde{A}$, we conclude that $E\widetilde{\mc{A}}E$ is an algebra.
	\end{proof}
	\smallskip
	
	The following proposition describes an obvious sufficient condition for an algebra to exhibit the projection or idempotent compression property, and will be useful in building our first class of examples.

		\begin{prop}\label{LR proposition}

		Let $n$ be a positive integer, and let $\mathcal{A}$ be a subalgebra of $\mm_n$. If $AEB\in\mathcal{A}$ for all $A,$ $B\in\mathcal{A}$, and all idempotents (resp. projections) $E\in\mm_n$, then $\mathcal{A}$ is idempotent (resp. projection) compressible.
	
	\end{prop}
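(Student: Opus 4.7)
The plan is to verify directly that $E\mathcal{A}E$ is multiplicatively closed for every idempotent (resp. projection) $E$, since any corner $E\mathcal{A}E$ is automatically a linear subspace. So the entire content of the proposition is showing closure under the product.

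The key computation is almost forced: take two arbitrary elements $EAE$ and $EBE$ of $E\mathcal{A}E$ with $A,B \in \mathcal{A}$, and use $E^2 = E$ to write
\[
(EAE)(EBE) = E A E^2 B E = E(AEB)E.
\]
By the hypothesis, $AEB \in \mathcal{A}$, so $E(AEB)E$ lies in $E\mathcal{A}E$. This establishes multiplicative closure, and hence $E\mathcal{A}E$ is an algebra. The same calculation handles the projection case verbatim, with $E$ replaced by an orthogonal projection $P$.

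There is no real obstacle here; the statement is essentially just an unpacking of the definition of compressibility combined with the idempotency $E^2 = E$. The only thing one might flag is that $E\mathcal{A}E$ is trivially a linear subspace of $\mathbb{M}_n$ (since $\mathcal{A}$ is a subspace and the map $X \mapsto EXE$ is linear), so verifying multiplicative closure is sufficient to conclude that $E\mathcal{A}E$ is an algebra. This observation was already noted earlier in the excerpt, so no further justification is required.
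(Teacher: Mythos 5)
Your proof is correct and is essentially identical to the paper's: both use $E^2=E$ to rewrite $(EAE)(EBE)=E(AEB)E$ and invoke the hypothesis $AEB\in\mathcal{A}$ to conclude multiplicative closure. Nothing further is needed.
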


	\begin{proof}
		Let $E$ be an idempotent (resp. a projection) in $\mm_n$. Given $A,B\in\mathcal{A}$, we have that $AEB\in\mathcal{A}$, and hence $$(EAE)(EBE)=E(AEB)E\in E\mc{A}E.\smallskip$$ 
		This demonstrates that $E\mc{A}E$ is multiplicatively closed, and therefore $E\mc{A}E$ is an algebra.
	\end{proof}
	\smallskip
	
	The sufficient condition for idempotent compressibility from Proposition~\ref{LR proposition} strongly resembles the multiplicative absorption property satisfied by ideals. In particular, this result implies that any (one- or two-sided) ideal of $\mm_n$ exhibits the idempotent compression property. It will be shown in Corollary~\ref{LR-algebras are idempotent compressible} that this property also holds for the intersection of one-sided ideals, or equivalently, the intersection of a single left ideal with a single right ideal. Thus, we make following definition.

	\begin{defn}\label{def of LR algs}
		If $\mc{A}$ is a subalgebra of $\mm_n$ given by an intersection of a left ideal and a right ideal in $\mm_n$, then $\mc{A}$ is said to be an $\mc{LR}$\textit{-algebra}.
		
	\end{defn}
	
It is straightforward to show that any algebra that is transpose similar to an $\mc{LR}$-algebra $\mc{A}$ is again an $\mc{LR}$-algebra. Indeed, if $\mc{A}=\mc{L}\cap \mc{R}$ for some left ideal $\mc{L}$ and right ideal $\mc{R}$ of $\mm_n$, then $\mc{R}^T$ is a left ideal, $\mc{L}^T$ is a right ideal, and $\mc{A}^T=\mc{R}^T\cap\mc{L}^T$. Hence, $\mc{A}^T$ is also an $\mc{LR}$-algebra. If $\mc{B}$ is transpose similar to $\mc{A}$, then by replacing $\mc{A}$ with $\mc{A}^T$ if necessary, we may assume that $$\mc{B}=S^{-1}\mc{A}S=\left(S^{-1}\mc{L}S\right)\cap\left(S^{-1}\mc{R}S\right)\smallskip$$ for some invertible $S\in\mm_n$. Since $S^{-1}\mc{L}S$ and $S^{-1}\mc{R}S$ are left and right ideals of $\mm_n$, respectively, $\mc{B}$ is again an $\mc{LR}$-algebra.

It is well-known that the one-sided ideals in $\mm_n$ can be described in terms of projections. In particular, each left ideal of $\mm_n$ has the form $\mm_n Q$ for some orthogonal projection $Q$, while each right ideal has the form $P\mm_n$ for some orthogonal projection $P$. More generally, we have the following classical ring-theoretic result concerning $\mm_n$-submodules of the $n\times p$ and $p\times n$ matrices (see \cite[Theorem 3.3]{Lam}). This result will be used in $\S5$ and invoked extensively throughout the classification in \cite{ZCramerCompressibility}.

\begin{thm}\label{structure of modules over Mn}

Let $n$ and $p$ be positive integers. 
	\begin{itemize}
		\item[(i)]If $\mc{S}\subseteq\mm_{n\times p}$ is a left $\mm_n$-module, then there is a projection $Q\in\mm_p$ such that $\mc{S}=\mm_{n\times p}Q$.
		\item[(ii)]If $\mc{S}\subseteq\mm_{p\times n}$ is a right $\mm_n$-module, then there is a projection $P\in\mm_p$ such that $\mc{S}=P\mm_{p\times n}$.
	\end{itemize}
	
\end{thm}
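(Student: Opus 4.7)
The plan is to prove (i) by a row-level analysis of the submodule and then deduce (ii) via transposition.

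For (i), let $\mc{S}$ be a left $\mm_n$-submodule of $\mm_{n\times p}$. The key observation is that for the matrix units $E_{i,k}\in\mm_n$, the product $E_{i,k}M$ places the $k$-th row of $M$ in row $i$ and zeros elsewhere, so one can rearrange and isolate rows within $\mc{S}$ at will. Accordingly, I would define
$$W=\mathrm{span}\{\text{rows of }M:M\in\mc{S}\}\subseteq\cc^{1\times p}$$
and verify the equality
$$\mc{S}=\{M\in\mm_{n\times p}:\text{every row of }M\text{ lies in }W\}.$$
The inclusion "$\subseteq$" is immediate from the definition of $W$, while "$\supseteq$" follows by combining the elementary-matrix trick with linearity of $\mc{S}$ to assemble any candidate target matrix one row at a time from rows already present in elements of $\mc{S}$.

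To realize this set as $\mm_{n\times p}Q$, I would fix an orthonormal basis $w_1,\ldots,w_l$ of $W$ and set $Q=\sum_{k=1}^{l}w_k^*w_k\in\mm_p$. A routine check shows that $Q$ is a self-adjoint idempotent of rank $l$ whose row space equals $W$; since $\mm_{n\times p}Q$ consists exactly of those matrices whose rows lie in the row space of $Q$ (for "$\supseteq$", write each row $r_i$ of $M$ as $s_iQ$ and stack the $s_i$'s), this completes (i).

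Part (ii) follows by transposition. If $\mc{S}\subseteq\mm_{p\times n}$ is a right $\mm_n$-module, then $\mc{S}^T\subseteq\mm_{n\times p}$ is a left $\mm_n$-module, so (i) yields an orthogonal projection $Q_0\in\mm_p$ with $\mc{S}^T=\mm_{n\times p}Q_0$. Transposing gives $\mc{S}=Q_0^T\mm_{p\times n}$; here $Q_0^T$ is idempotent because $(Q_0^T)^2=(Q_0^2)^T=Q_0^T$, and self-adjoint because $(Q_0^T)^*=\overline{Q_0}=Q_0^T$, the last equality using $Q_0=Q_0^*$.

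The main obstacle, as I see it, is purely bookkeeping: keeping rows straight from columns, left modules straight from right modules, and orthogonal projections straight from general idempotents, especially in setting up the correspondence between the row subspace $W$ and the projection $Q$. Once this correspondence is nailed down, the underlying algebra is routine.
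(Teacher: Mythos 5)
Your proof is correct. Note that the paper itself does not prove Theorem~\ref{structure of modules over Mn} at all: it is quoted as a classical ring-theoretic fact with a citation to Lam, so there is no in-paper argument to compare against. What you supply is a self-contained elementary proof, and it holds up: the matrix-unit computation $(E_{i,k}M)$ correctly isolates and relocates rows, so $\mc{S}$ is determined by the row subspace $W$; the assembly of an arbitrary matrix with rows in $W$ one row at a time uses only closure under addition and left multiplication (scalar closure coming from $\lambda I\in\mm_n$); and your $Q=\sum_k w_k^*w_k$ built from an orthonormal basis of $W$ is genuinely an \emph{orthogonal} projection (self-adjoint idempotent), which matters because the paper uses ``projection'' in the orthogonal sense throughout. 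This last point is the one place a blind attempt could easily have slipped — producing only an idempotent with the right range — and you handle it correctly, including the verification in part (ii) that $Q_0^T$ remains self-adjoint via $Q_0^T=\overline{Q_0}$. The standard reference proof goes through the semisimple module theory of $\mm_n$ (every submodule is generated by an idempotent); your row-space argument is more concrete and buys an explicit formula for $Q$, at the cost of being specific to matrices over $\cc$ with the Hermitian inner product. No gaps.
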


\smallskip

	\begin{cor}\label{form of an LR alg cor}
	
	A subalgebra $\mc{A}$ of $\mm_n$ is an $\mc{LR}$-algebra if and only if there are projections $P$ and $Q$ in $\mm_n$ such that $\mc{A}=P\mm_nQ.$
	
	\end{cor}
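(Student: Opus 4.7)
The plan is to prove the two directions separately, using Theorem~\ref{structure of modules over Mn} to reduce both left and right ideals to concrete forms determined by projections.

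For the forward direction, suppose $\mc{A} = \mc{L} \cap \mc{R}$ where $\mc{L}$ is a left ideal and $\mc{R}$ is a right ideal of $\mm_n$. Applying Theorem~\ref{structure of modules over Mn}(i) to $\mc{L}$ (viewed as a left $\mm_n$-submodule of $\mm_n$) yields a projection $Q \in \mm_n$ with $\mc{L} = \mm_n Q$, and applying part (ii) to $\mc{R}$ yields a projection $P \in \mm_n$ with $\mc{R} = P\mm_n$. Thus $\mc{A} = \mm_n Q \cap P\mm_n$, and it remains to verify the identification
$$\mm_n Q \cap P \mm_n = P \mm_n Q.$$
The inclusion $\supseteq$ is immediate. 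For $\subseteq$, I would observe that any $A \in \mm_n Q$ satisfies $AQ = A$ (since $A = BQ$ gives $AQ = BQ^2 = BQ = A$, using $Q^2=Q$), and similarly any $A \in P\mm_n$ satisfies $PA = A$. Thus $A = PA = PAQ \in P\mm_n Q$.

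For the reverse direction, given projections $P$ and $Q$, I would show directly that $P\mm_n Q$ is an $\mc{LR}$-algebra. Since $P\mm_n Q \subseteq P\mm_n$ (a right ideal) and $P\mm_n Q \subseteq \mm_n Q$ (a left ideal), and the same computation as above shows $P\mm_n \cap \mm_n Q = P\mm_n Q$, this realizes $\mc{A}$ as the intersection of a left and a right ideal. One should also briefly note that $P\mm_n Q$ is in fact a subalgebra, which is clear from $(PAQ)(PBQ) = P(AQPB)Q \in P\mm_n Q$.

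The argument is essentially a direct application of Theorem~\ref{structure of modules over Mn} coupled with the small ``idempotent absorption'' computation $PAQ = A$ for $A$ in both ideals. I do not anticipate any real obstacle; the only place requiring a moment of care is the verification $\mm_n Q \cap P\mm_n \subseteq P\mm_n Q$, which hinges on using the idempotency of $P$ and $Q$ to recover the element from its ideal representation.
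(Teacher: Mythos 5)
Your argument is correct and is exactly the justification the paper intends: the corollary is stated without proof immediately after Theorem~\ref{structure of modules over Mn} and the remark that left and right ideals of $\mm_n$ have the forms $\mm_nQ$ and $P\mm_n$, and your identification $\mm_nQ\cap P\mm_n=P\mm_nQ$ via the idempotent absorption $A=PAQ$ is the standard verification being left to the reader. Nothing to add.
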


	The description of $\mc{LR}$-algebras presented in Corollary~\ref{form of an LR alg cor} allows one to quickly verify that these algebras admit the idempotent compression property.
	
	\begin{cor}\label{LR-algebras are idempotent compressible}
	
		Every $\mc{LR}$-algebra is idempotent compressible.	
	
	\end{cor}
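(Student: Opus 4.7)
The plan is to combine the two earlier results at our disposal: the structural description in Corollary~\ref{form of an LR alg cor}, which identifies any $\mc{LR}$-algebra $\mc{A}$ with $P\mm_n Q$ for some projections $P,Q\in\mm_n$, and the sufficient condition in Proposition~\ref{LR proposition}, which reduces idempotent compressibility to the ``absorption'' property $A\mc{A}B\subseteq\mc{A}$ against arbitrary idempotents.

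First I would fix $\mc{A}=P\mm_n Q$ and take arbitrary elements $A,B\in\mc{A}$, writing them as $A=PA'Q$ and $B=PB'Q$ for some $A',B'\in\mm_n$. Then for any idempotent $E\in\mm_n$ I would compute
\[
AEB=(PA'Q)\,E\,(PB'Q)=P\bigl(A'QEPB'\bigr)Q,
\]
which lies in $P\mm_n Q=\mc{A}$. This verifies the hypothesis of Proposition~\ref{LR proposition} (in fact, with no condition on $E$ beyond being a matrix, so ``idempotent'' is used only to invoke that proposition), and the conclusion that $\mc{A}$ is idempotent compressible is immediate.

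There is no real obstacle here; the entire content is packaged into the structure theorem (Corollary~\ref{form of an LR alg cor}) that rewrites $\mc{A}$ in the convenient ``two-sided corner'' form $P\mm_n Q$. Once that description is in hand, the absorption property is a one-line manipulation, and Proposition~\ref{LR proposition} closes the argument. The only point worth emphasizing in the write-up is that the computation does not require $E$ to be idempotent at all, so the same proof simultaneously yields projection compressibility, consistent with the fact that idempotent compressibility trivially implies projection compressibility.
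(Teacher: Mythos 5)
Your proposal is correct and follows exactly the paper's own argument: it invokes Corollary~\ref{form of an LR alg cor} to write $\mc{A}=P\mm_n Q$, verifies the absorption hypothesis via the identity $(PA'Q)E(PB'Q)=P(A'QEPB')Q\in P\mm_n Q$, and then applies Proposition~\ref{LR proposition}. Your closing observation that the computation never uses idempotency of $E$ is a nice extra remark but does not change the substance.
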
	
	
	\begin{proof}
	
		Let $\mc{A}$ be an $\mc{LR}$-algebra, so $\mc{A}=P\mm_n Q$ for some projections $P$ and $Q$ in $\mm_n$. If $E$ is an idempotent in $\mm_n$, then for any $A,B\in\mc{A}$,
		$$AEB=(PAQ)E(PBQ)=P(AQEPB)Q\in P\mm_n Q=\mc{A}.\smallskip$$
		Thus, $\mc{A}$ satisfies the assumptions of Proposition~\ref{LR proposition} in the case of idempotents. We conclude that $\mc{A}$ is idempotent compressible.
	\end{proof}
\smallskip

As a concrete example, any algebra generated by a rank-one operator is an $\mc{LR}$-algebra, and hence is idempotent compressible. A proof of this fact is given below. Note that for vectors $x,y\in\cc^n$, the notation $x\otimes y^*$ is used to denote the rank-rank-one operator $z\mapsto\langle z,y\rangle x$ acting on $\cc^n$.

	\begin{prop}\label{rank 1 operators generate LR algebras prop}
	If $R\in\mm_n$ is an operator of rank $1$, then $Alg(R)$---the algebra generated by $R$---is an $\mc{LR}$-algebra. Consequently, $Alg(R)$ is idempotent compressible.	
	
	\end{prop}
	
	\begin{proof}
		In light of the remarks following Definition~\ref{def of LR algs}, it suffices to prove that $Alg(R)$ is similar to an $\mc{LR}$-algebra.

		Let $\{e_1,e_2,\ldots, e_n\}$ denote the standard basis for $\cc^n$. As a rank-one operator, $R$ is either nilpotent or a scalar multiple of an idempotent. If $R$ is nilpotent, then $R$ is unitarily equivalent to $e_1\otimes e_2^*$. Consequently, $Alg(R)$ is unitarily equivalent to $\cc e_1\otimes e_2^*$. If instead $R$ is a multiple of an idempotent, then $R$ is similar to $\alpha e_1\otimes e_1^*$ for some non-zero $\alpha\in\cc$. Consequently, $Alg(R)$ is similar to $\cc e_1\otimes e_1^*$. In either case, $Alg(R)$ is an $\mc{LR}$-algebra.
	\end{proof}
	
	The fact that $\mc{LR}$-algebras admit the idempotent compression property gives us a means to disprove the converse to Proposition~\ref{unitization of compressible algebra is compressible prop}. We will exhibit a subalgebra of $\mm_3$ that is not projection compressible, but whose unitization is idempotent compressible. 
	
	Indeed, let $\{e_1,e_2,e_3\}$ denote the standard basis for $\cc^3$ and for each $i$, let $Q_i$ denote the orthogonal projection onto the span of $\left\{e_i\right\}$. Consider the algebra $\mc{A}=\cc(Q_1+Q_2)$. Note that the unitization of $\mc{A}$ is also the unitization of the $\mc{LR}$-algebra $\mc{B}\coloneqq \cc Q_3=Q_3\mm_3Q_3.$ By Corollary~\ref{LR-algebras are idempotent compressible} and Proposition~\ref{unitization of compressible algebra is compressible prop}, $\tilde{A}$ is idempotent compressible, a fortiori, projection compressible.
	
	To see that $\mc{A}$ is not projection compressible, consider the matrix $$P=\begin{bmatrix}
	\phantom{-}2 & -1 & -1\\
	-1 & \phantom{-}2 & -1\\
	-1 & -1 & \phantom{-}2
	\end{bmatrix},$$ and note that $\frac{1}{3}P$ is a projection in $\mm_3$. We claim that $\left(\frac{1}{3} P\right)\mc{A}\left(\frac{1}{3}P\right)$ is \textit{not} an algebra. Of course, since $(\frac{1}{3}P)\mc{A}(\frac{1}{3}P)$ is an algebra if and only if $P\mc{A}P$ is an algebra, it suffices to prove that $P\mc{A}P$ is not multiplicatively closed. 
	
	One may verify that every element $B=(b_{ij})$ in $P\mc{A}P$ satisfies the equation $b_{22}+5b_{23}=0$. With $B=e_1\otimes e_1^*+e_2\otimes e_2^*$,
	however, we have that 
	$$(PBP)^2=\begin{bmatrix}
	\phantom{-}42 & -39 & -3\\
	-39 & \phantom{-}42 & -3\\
	-3 & -3 & \phantom{-}6
	\end{bmatrix}.$$ This matrix clearly does not satisfy the above equation, and hence $(PBP)^2$ does not belong to $P\mc{A}P$. Thus, $P\mc{A}P$ is not an algebra, so $\mc{A}$ is not projection compressible.
	
	\begin{rmk}
	
	 \upshape{When determining whether or not a corner $E\mc{A}E$ is an algebra, it is often more computationally convenient to consider a multiple of the idempotent $E$ rather than $E$ itself. This simplification will frequently be used without mention.\\	}
	
	\end{rmk}

	\section{Examples of Idempotent Compressible Algebras}\label{Section: Examples}	

While $\mc{LR}$-algebras comprise a large collection of idempotent compressible algebras, they are not the only examples. The purpose of \S\ref{Section: Examples} is to expand our library of matrix algebras that admit the idempotent compression property. 

In $\S\ref{Subsection: Subalgebras of Mn, n>=3}$ we present three distinct families of idempotent compressible algebras that arise as subalgebras of $\mm_n$ for each $n\geq 3$. In $\S\ref{Subsection: Subalgebras of M3}$, we highlight three additional examples of idempotent compressible algebras that occur uniquely in the setting of $3\times 3$ matrices. The algebras presented in these sections lay the groundwork for the classification of compressible algebras in \cite{ZCramerCompressibility}.

Note that we do not explicitly prove that the examples considered throughout $\S3$ are in fact, algebras. However, each example will be defined as a subspace of $\mm_n$ written in terms of orthogonal projections $Q_1$, $Q_2$, and $Q_3$ that sum to $I$. Since such projections necessarily have pairwise orthogonal ranges \linebreak \cite[Proposition~4.19]{Douglas}, it will follow that these subspaces are multiplicatively closed, and hence algebras. For convenience, each example will presented as a collection of block $3\times 3$ matrices written with respect to the orthogonal direct sum decomposition $\cc^n=\ran(Q_1)\oplus\ran(Q_2)\oplus\ran(Q_3)$.

In the special case that one or more of the projections $Q_1,Q_2,Q_3$ has rank $1$, the following Proposition will be useful in verifying the idempotent compression property for the algebra $\mc{A}$.

	\begin{prop}\label{prop on rank 1 operators}

If $E\in\mm_n$ is an operator of rank $1$, then the linear space $\cc E$ is an algebra, and $E\mm_n E$ is contained in $\cc E$.\smallskip

\end{prop}

\begin{proof}

Let $E$ be as above. As a rank-one operator, $E$ is either nilpotent or a scalar multiple of an idempotent. Hence, $\cc E$ is closed under multiplication. Writing $E=x\otimes y^*$ for some vectors $x,y\in\cc^n$, we have that for any $A\in\mm_n$, 
$$EAE=(x\otimes y^*)A(x\otimes y^*)=\langle Ax,y\rangle (x\otimes y^*)=\langle Ax,y\rangle E\in\cc E.\smallskip$$
Thus, $E\mm_n E\subseteq \cc E$.
\end{proof} 
\smallskip

	
\subsection{Subalgebras of $\mm_n$, $n\geq 3$}\label{Subsection: Subalgebras of Mn, n>=3}

This section is devoted to the exposition of three families of idempotent compressible algebras that exist in $\mm_n$ for each $n\geq 3$. These families are described in Examples~\ref{a family of compressible algebras}, \ref{a family of compressible algebras 2}, and \ref{horrible family of idempotent compressible algebras}, respectively.

	\begin{exmp}\label{a family of compressible algebras}
		
		Let $n\geq 3$ be an integer. If $Q_1$, $Q_2$, and $Q_3$ are projections in $\mm_n$ which sum to $I$, then the algebra
		$$\begin{array}{l}
		\mathcal{A}\coloneqq \cc Q_1+(Q_1+Q_2)\mm_n(Q_2+Q_3)=\left\{\begin{bmatrix}
		\alpha I & M_{12} & M_{13}\\
		0 & M_{22} & M_{23}\\
		0 & 0 & 0
		\end{bmatrix}:\alpha\in\cc,M_{ij}\in Q_i\mm_nQ_j\right\}
		\end{array}$$ has the idempotent compression property.
	Consequently, its unitization 
	$$\begin{array}{l}
	\widetilde{\mathcal{A}}=\cc Q_1+\cc Q_3+(Q_1+Q_2)\mm_n(Q_2+Q_3)=\left\{\begin{bmatrix}
		\alpha I & M_{12} & M_{13}\\
		0 & M_{22} & M_{23}\\
		0 & 0 & \beta I
		\end{bmatrix}:\alpha,\beta\in\cc,M_{ij}\in Q_i\mm_nQ_j\right\}
		\end{array}$$ has the idempotent compression property as well.
	
	\end{exmp}
	
	\begin{proof}
		Define $\mathcal{A}_1\coloneqq \cc Q_1$ and $\mathcal{A}_2\coloneqq (Q_1+Q_2)\mm_n(Q_2+Q_3),$ so that $\mathcal{A}=\mathcal{A}_1\dotplus \mathcal{A}_2$. Let $E$ be an idempotent in $\mm_n$. We will show that $E\mc{A}E$ contains the product $E\mc{A}_iE\cdot E\mc{A}_jE$ for each choice of $i$ and $j$.
		
Since $\mc{A}_2$ is an $\mc{LR}$-algebra, it is easy to see that $(E\mathcal{A}_2E)^2$ is contained in $E\mc{A}E$. What's more, the equation $Q_1=(Q_1+Q_2)Q_1$ shows that $E\mathcal{A}_1E\cdot E\mathcal{A}_2E$ is contained in $E\mathcal{A}_2E$, and hence in $E\mc{A}E$.
To see that $\left(E\mc{A}_1E\right)^2$ is contained in $E\mc{A}E$, write
		$$(EQ_1E)^2=EQ_1E-E(Q_1+Q_2)Q_1E\cdot E(Q_2+Q_3)E.\smallskip$$
	Finally, if $T\in\mm_n$, then the equation
	\begin{align*}	E(Q_1+Q_2)T(Q_2+Q_3)E\cdot EQ_1E&=E(Q_1+Q_2)T(Q_2+Q_3)E\\[0.5ex]&\,\,\,\,\,\,\,-E(Q_1+Q_2)T(Q_2+Q_3)E\cdot E(Q_2+Q_3)E,
	\end{align*}
proves that $E\mathcal{A}_2E\cdot E\mathcal{A}_1E$ is contained in $E\mathcal{A}E.$
	\end{proof}
\smallskip
	
	\begin{rmk}
	\upshape{
		Let $\left\{e_1,e_2,e_3\right\}$ denote the standard basis for $\cc^3$. For each $i\in\{1,2,3\}$, let $Q_i$ denote the orthogonal projection of $\cc^3$ onto $\cc e_i$. By Example~\ref{a family of compressible algebras}, the algebra $$\begin{array}{l}\mc{A}=\cc Q_1+\cc Q_3+(Q_1+Q_2)\mm_n(Q_2+Q_3)=\left\{\begin{bmatrix}
		\alpha & x & y\\
		0 & \beta & z\\
		0 & 0 & \gamma
		\end{bmatrix}:\alpha,\beta,\gamma,x,y,z\in\cc\right\}\end{array}$$ of all $3\times 3$ upper triangular matrices is idempotent compressible.\\
	}
	\end{rmk}

	\begin{exmp}\label{a family of compressible algebras 2}

		Let $n\geq 3$ be an integer. If $Q_1$ and $Q_2$ are mutually orthogonal rank-one projections in $\mm_n$, and $Q_3=I-Q_1-Q_2$, then the algebra
		$$\begin{array}{l}
		\mathcal{A}\coloneqq \cc Q_1+\cc Q_2+(Q_1+Q_2)\mm_nQ_3=\left\{\begin{bmatrix}
		\alpha & 0 & M_{13}\\
		0 & \beta & M_{23}\\
		0 & 0 & 0
		\end{bmatrix}:\alpha,\beta\in\cc, M_{ij}\in Q_i\mm_nQ_j\right\}\end{array}$$ has the idempotent compression property.
		Consequently, its unitization  
		$$\begin{array}{l}\widetilde{\mathcal{A}}=\cc Q_1+\cc Q_2+\cc Q_3+(Q_1+Q_2)\mm_nQ_3=\left\{\begin{bmatrix}
		\alpha & 0 & M_{13}\\
		0 & \beta & M_{23}\\
		0 & 0 & \gamma I
		\end{bmatrix}:\alpha,\beta,\gamma\in\cc,M_{ij}\in Q_i\mm_nQ_j\right\}\end{array}$$
		has the idempotent compression property as well.
	
	\end{exmp}
	
	\begin{proof}
		Define $\mathcal{A}_1\coloneqq \cc Q_1,$ $\mathcal{A}_2\coloneqq \cc Q_2,$ and $ \mathcal{A}_3\coloneqq (Q_1+Q_2)\mm_nQ_3,$ so that $\mathcal{A}=\mc{A}_1\dotplus\mc{A}_2\dotplus\mc{A}_3$. Let $E$ be an idempotent in $\mm_n$. As in the previous proof, we will show that $E\mc{A}E$ contains the product $E\mc{A}_iE\cdot E\mc{A}_jE$ for all choices of $i$ and $j$.
		
		Note that $\mc{A}_1$, $\mc{A}_2$, and $\mc{A}_3$ are $\mc{LR}$-algebras, so $E\mc{A}E$ contains $(E\mc{A}_iE)^2$ for all $i$. Moreover, it is easy to see that $E\mathcal{A}_1E\cdot E\mathcal{A}_3E$ and $E\mathcal{A}_2E\cdot E\mathcal{A}_3E$ are contained in $E\mathcal{A}E$. 
		 From these inclusions it follows that $E\mathcal{A}_1E\cdot E\mathcal{A}_2E$ and $E\mc{A}_2E\cdot E\mc{A}_1E$ are contained in $E\mathcal{A}E$, as  
		$$\begin{array}{rl}EQ_1E\cdot EQ_2E=EQ_1E-EQ_1E\cdot EQ_1E-E(Q_1+Q_2)Q_1E\cdot EQ_3E, & \text{and}\vspace{0.3cm}\\
		EQ_2E\cdot EQ_1E=EQ_2E-EQ_2E\cdot EQ_2E-E(Q_1+Q_2)Q_2E\cdot EQ_3E.\end{array}\smallskip $$
		
		The proof will be complete upon showing that $E\mathcal{A}_3E\cdot E\mc{A}_1E$ and $E\mathcal{A}_3E\cdot E\mc{A}_2E$ are contained in $E\mathcal{A}E$. To demonstrate that this is the case, 
		observe that for any $T\in\mm_n$,
		$$\begin{array}{l}
		E(Q_1+Q_2)TQ_3E\cdot EQ_1E=EQ_1TQ_3E\cdot EQ_1E-EQ_2TQ_3E\cdot EQ_2+EQ_2T(I-Q_3E)Q_3E.
		\end{array}\smallskip$$
By Proposition~\ref{prop on rank 1 operators}, the first two summands on the right-hand side of this equation belong to $E\mc{A}_1E$ and  $E\mc{A}_2E$, respectively. Moreover, the final summand belongs to $E\mc{A}_3E$. Consequently, $E\mathcal{A}_3E\cdot E\mathcal{A}_1E$ is contained in $E\mathcal{A}E$. The inclusion $E\mc{A}_3E\cdot E\mc{A}_2E\subseteq E\mc{A}E$ can be deduced in a similar fashion.
	\end{proof} 
	\smallskip
	
	It was fairly routine to verify that the algebras presented in Examples \ref{a family of compressible algebras} and \ref{a family of compressible algebras 2} admit the idempotent compression property. Showing that this condition holds for the algebra $\mc{A}$ in our next example is not so straightforward. We will first present two lemmas that describe sufficient conditions for an arbitrary corner of this algebra to be an algebra itself. It will then be shown in Example \ref{horrible family of idempotent compressible algebras} that every such corner of $\mc{A}$ must satisfy one of these conditions. This will prove that this algebra is indeed idempotent compressible.\smallskip

	\begin{lem}\label{compressible whenever EQ_2E belongs to EAE}
	
		Let $n\geq 3$ be an integer, let $Q_1,Q_2\in\mm_n$ be mutually orthogonal rank-one projections, and define $Q_3\coloneqq I-Q_1-Q_2$. Consider the subalgebra $\mc{A}$ of $\mm_{n}$ given by  
		$$\begin{array}{l}\mathcal{A}\coloneqq \cc(Q_1+Q_2)+Q_1\mm_{n}Q_2+(Q_1+Q_2)\mm_{n}Q_3=\left\{\begin{bmatrix}
		\alpha & x & M_{13}\\
		0 & \alpha & M_{23}\\
		0 & 0 & 0
		\end{bmatrix}:\alpha,x\in\cc,M_{ij}\in Q_i\mm_nQ_j\right\}.\end{array}$$
If $E$ is an idempotent in $\mm_n$ and $ E\mathcal{A}E$ contains $EQ_2E$, then $E\mathcal{A}E$ is an algebra.
	
	\end{lem}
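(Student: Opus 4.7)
The plan is to decompose $\mathcal{A} = \mathcal{A}_1 \dotplus \mathcal{A}_2 \dotplus \mathcal{A}_3$ with $\mathcal{A}_1 = \cc(Q_1+Q_2)$, $\mathcal{A}_2 = Q_1 \mm_n Q_2$, and $\mathcal{A}_3 = (Q_1+Q_2)\mm_n Q_3$, and to verify that each of the nine pairwise products $E\mathcal{A}_i E \cdot E\mathcal{A}_j E$ lies in $E\mathcal{A}E$. Since $\mathcal{A}_2$ and $\mathcal{A}_3$ are $\mc{LR}$-algebras, the self-products $E\mathcal{A}_2 E \cdot E\mathcal{A}_2 E$ and $E\mathcal{A}_3 E \cdot E\mathcal{A}_3 E$ are handled immediately by Corollary~\ref{LR-algebras are idempotent compressible}.

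The hypothesis is put to work at once: combining $E(Q_1+Q_2)E \in E\mathcal{A}E$ with $EQ_2 E \in E\mathcal{A}E$ yields $EQ_1 E \in E\mathcal{A}E$. This enlarges the ``scalar diagonal'' of the compression from $\cc \cdot E(Q_1+Q_2)E$ to $\cc\, EQ_1 E + \cc\, EQ_2 E$, which is essential for the remaining work. Writing $q_i$ for a unit vector spanning $\ran(Q_i)$, the rank-one identities $Q_i M Q_i = \langle Mq_i, q_i\rangle\, Q_i$ for $i = 1, 2$ and $Q_2 M Q_1 = \langle Mq_1, q_2\rangle\, (q_2\otimes q_1^*)$, valid for every $M \in \mm_n$, allow me to expand each remaining product $AEB$ (with $A, B$ in the generating sets of the $\mathcal{A}_i$'s) into a linear combination of elements of $\mathcal{A}$, scalar multiples of $Q_1$ or $Q_2$, and scalar multiples of $q_2 \otimes q_1^*$. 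The first two types of contributions land in $E\mathcal{A}E$ immediately.

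The genuinely subtle terms are the scalar multiples of $E(q_2 \otimes q_1^*) E$. Tracking through the expansions shows that these arise only in the two products $E\mathcal{A}_1 E \cdot E\mathcal{A}_1 E$ (with coefficient $\langle Eq_1, q_2\rangle$) and $E\mathcal{A}_3 E \cdot E\mathcal{A}_1 E$ (with coefficient of the form $\langle YQ_3 Eq_1, q_2\rangle$). Here a case split on whether $q_1 \in \ran(E)$ closes the argument. If $q_1 \in \ran(E)$, then $Eq_1 = q_1$, which forces $\langle Eq_1, q_2\rangle = 0$ and $Q_3 Eq_1 = Q_3 q_1 = 0$, so both coefficients vanish automatically. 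Otherwise $q_1 \notin \ran(E)$, equivalently $\ker(E^*) \not\subseteq q_1^\perp$; choosing $\tilde k \in \ker(E^*)$ with $\langle \tilde k, q_1\rangle = -1$ and setting $b = q_1 + \tilde k$, one has $b \in \ker Q_1$ and $E^* b = E^* q_1$, and the decomposition $b = \langle \tilde k, q_2\rangle\, q_2 + Q_3 \tilde k$ gives
$$E(q_2 \otimes q_1^*) E \,=\, (Eq_2)(E^* b)^* \,=\, \overline{\langle \tilde k, q_2\rangle}\, EQ_2 E \,+\, E(Q_2 Y Q_3) E$$
for any $Y \in \mm_n$ with $Q_3 Y^* q_2 = Q_3 \tilde k$. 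The first summand lies in $E\mathcal{A}E$ by the hypothesis, the second lies in $E\mathcal{A}_3 E \subseteq E\mathcal{A}E$, and hence $E(q_2 \otimes q_1^*) E \in E\mathcal{A}E$.

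The main obstacle is the bookkeeping across all nine product types together with identifying precisely where the hypothesis enters. Once the rank-one identities and the case split on $q_1 \in \ran(E)$ are in place, the hypothesis closes exactly the two gaps---the diagonal $EQ_2 E$ contribution and the off-diagonal $E(q_2 \otimes q_1^*) E$ contribution---that would otherwise obstruct the multiplicative closure of $E\mathcal{A}E$.
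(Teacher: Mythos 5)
Your proof is correct, but it takes a genuinely different and considerably longer route than the paper's. The paper's argument is a two-line reduction: since $E(Q_1+Q_2)E$ lies in $E\mc{A}E$ automatically and $EQ_2E$ lies there by hypothesis, one has
$$E\mc{A}E=\cc EQ_1E+\cc EQ_2E+EQ_1\mm_nQ_2E+E(Q_1+Q_2)\mm_nQ_3E=E\mc{A}_0E,$$
where $\mc{A}_0\coloneqq\cc Q_1+\cc Q_2+Q_1\mm_nQ_2+(Q_1+Q_2)\mm_nQ_3$; because $Q_2$ has rank one, $\cc Q_2+Q_1\mm_nQ_2=(Q_1+Q_2)\mm_nQ_2$, so $\mc{A}_0=\cc Q_1+(Q_1+Q_2)\mm_n(Q_2+Q_3)$ is exactly the idempotent compressible algebra of Example~\ref{a family of compressible algebras}, and the conclusion is immediate. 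You instead verify multiplicative closure by hand across the nine products $E\mc{A}_iE\cdot E\mc{A}_jE$, and your bookkeeping checks out: the rank-one identities correctly reduce everything to elements of $E\mc{A}E$, multiples of $EQ_1E$ and $EQ_2E$ (both available, the latter by hypothesis and the former by subtraction), and multiples of $E(q_2\otimes q_1^*)E$, which indeed arise only from $E\mc{A}_1E\cdot E\mc{A}_1E$ and $E\mc{A}_3E\cdot E\mc{A}_1E$. Your dichotomy is also sound: if $Eq_1=q_1$ the offending coefficients $\langle Eq_1,q_2\rangle$ and $\langle MQ_3Eq_1,q_2\rangle$ vanish, and otherwise the vector $b=q_1+\tilde k$ with $\tilde k\in\ker(E^*)$, $\langle\tilde k,q_1\rangle=-1$ satisfies $E^*b=E^*q_1$ and $Q_1b=0$, yielding $E(q_2\otimes q_1^*)E=\overline{\langle\tilde k,q_2\rangle}\,EQ_2E+E(Q_2YQ_3)E\in E\mc{A}E$. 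What the paper's approach buys is economy---it reuses Example~\ref{a family of compressible algebras} rather than re-deriving its content, which is essentially what your nine-product verification does; what your approach buys is a self-contained argument that makes explicit exactly where the hypothesis $EQ_2E\in E\mc{A}E$ is indispensable, including the less obvious role it plays (alongside $E\mc{A}_3E$) in absorbing the off-diagonal rank-one term.
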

	
	\begin{proof}
	
	Let $E$ be a fixed idempotent in $\mm_n$ and suppose that $EQ_2E\in E\mathcal{A}E$. Define
	$$\begin{array}{l}
	\mathcal{A}_0\coloneqq\cc Q_1+\cc Q_2+Q_1\mm_nQ_2+(Q_1+Q_2)\mm_nQ_3\vspace{0.2cm}\\
	\phantom{\mc{A}_0}\,=\cc Q_1+(Q_1+Q_2)\mm_n(Q_2+Q_3),
	\end{array}\smallskip$$
	and note that $\mc{A}_0$ is idempotent compressible by Example \ref{a family of compressible algebras}. It then follow that
	$$\begin{array}{l}
	E\mathcal{A}E=\cc E(Q_1+Q_2)E+EQ_1\mm_nQ_2E+E(Q_1+Q_2)\mm_nQ_3E\vspace{0.2cm}\\
	\phantom{E\mathcal{A}E}=\cc EQ_1E+\cc EQ_2E+EQ_1\mm_nQ_2E+E(Q_1+Q_2)\mm_nQ_3E\vspace{0.2cm}\\
	\phantom{E\mathcal{A}E}=E\mathcal{A}_0E\end{array}
$$
is an algebra.
	\end{proof} 
\smallskip

	\begin{lem}\label{EQ_1=Q_1 lemma}
	
		Let $n\geq 3$ be an integer, let $Q_1,Q_2\in\mm_n$ be mutually orthogonal rank-one projections, and define $Q_3\coloneqq I-Q_1-Q_2$. Let $\mathcal{A}$ denote the subalgebra of $\mm_{n}$ given by 
		$$\begin{array}{l}\mathcal{A}\coloneqq \cc(Q_1+Q_2)+Q_1\mm_nQ_2+(Q_1+Q_2)\mm_nQ_3=\left\{\begin{bmatrix}
		\alpha & x & M_{13}\\
		0 & \alpha & M_{23}\\
		0 & 0 & 0
		\end{bmatrix}:\alpha,x\in\cc, M_{ij}\in Q_i\mm_nQ_j\right\}.\end{array}$$
		If $E$ is an idempotent in $\mm_n$ such that $EQ_1=Q_1$, then $E\mathcal{A}E$ is an algebra.	
	
	\end{lem}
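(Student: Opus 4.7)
The plan is to invoke Lemma~\ref{compressible whenever EQ_2E belongs to EAE}, which reduces the task to showing $EQ_2E\in E\mc{A}E$; when this fails I will argue that $E\mc{A}E=\mc{A}E$ directly. Set $v:=Ee_2$ and $r_j:=e_j^*E$. The hypothesis $EQ_1=Q_1$ is equivalent to $Ee_1=e_1$, and hence $E(e_i\otimes e_j^*)E=(Ee_i)(e_j^*E)$; in particular $EQ_2E=v\,r_2$. I split into three cases according to $v$ and the position of $r_2$.

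In the first case, $v\in\mathrm{span}(e_1)$: writing $v=\mu e_1$, the element $A_0:=\mu e_1\otimes e_2^*\in Q_1\mm_nQ_2\subseteq\mc{A}$ satisfies $EA_0E=\mu e_1r_2=vr_2=EQ_2E$, so Lemma~\ref{compressible whenever EQ_2E belongs to EAE} finishes the job. In the second case, $v\notin\mathrm{span}(e_1)$ but $r_2\in\mathrm{span}(r_3,\ldots,r_n)$: writing $r_2=\sum_{j\geq 3}d_jr_j$, the element $A_0:=\sum_{j\geq 3}d_j\,e_2\otimes e_j^*\in(Q_1+Q_2)\mm_nQ_3\subseteq\mc{A}$ again satisfies $EA_0E=vr_2=EQ_2E$, and Lemma~\ref{compressible whenever EQ_2E belongs to EAE} applies.

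The remaining case is $v\notin\mathrm{span}(e_1)$ with $r_2\notin\mathrm{span}(r_3,\ldots,r_n)$; here no such $A_0$ can exist, and I will argue differently. Applying $E^2=E$ to row $2$ yields $(1-E_{22})r_2=\sum_{k\geq 3}E_{2k}r_k$, which already lies in $\mathrm{span}(r_3,\ldots,r_n)$; since $r_2$ does not, we must have $E_{22}=1$. Further examination of $E^2=E$ (using the column-$2$ relations for $i\geq 3$ together with the hypothesis $r_2\notin\mathrm{span}(r_j:j\geq 3)$) forces $E_{j2}=0$ for every $j\geq 3$, and then $E_{12}=0$ as well, so $v=e_2$. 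Thus $Ee_2=e_2$, and both $e_1$ and $e_2$ lie in $\mathrm{ran}(E)$. Since every element of $\mc{A}$ has all its columns in $\mathrm{span}(e_1,e_2)\subseteq\mathrm{ran}(E)$, we obtain $EA=A$ for all $A\in\mc{A}$, giving $E\mc{A}E=\mc{A}E$. Closure under multiplication then follows from $(A_1E)(A_2E)=A_1(EA_2)E=A_1A_2E\in\mc{A}E$.

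The principal obstacle is this last case: one needs to extract, from the purely linear hypothesis $r_2\notin\mathrm{span}(r_3,\ldots,r_n)$, the strong structural conclusion $Ee_2=e_2$. Once this is in hand, the identity $E\mc{A}E=\mc{A}E$ and the multiplicative closure of $\mc{A}E$ are essentially immediate.
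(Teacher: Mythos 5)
Your proof is correct, and it takes a genuinely different route from the paper's. The paper decomposes $\mc{A}=\cc(Q_1+Q_2)\dotplus Q_1\mm_nQ_2\dotplus(Q_1+Q_2)\mm_nQ_3$ and checks all nine products $E\mc{A}_iE\cdot E\mc{A}_jE$ by hand via explicit identities, several of which use $EQ_1=Q_1$; it does not pass through Lemma~\ref{compressible whenever EQ_2E belongs to EAE} at all. You instead write $EQ_2E=(Ee_2)(e_2^*E)$ and try to realize it as $EA_0E$ for a rank-one $A_0$ in $Q_1\mm_nQ_2$ or $Q_2\mm_nQ_3$, which succeeds unless simultaneously $Ee_2\notin\cc e_1$ and $e_2^*E\notin\mathrm{span}\{e_j^*E:j\ge 3\}$; in that residual case you deduce from $E^2=E$ together with $Ee_1=e_1$ that $Ee_2=e_2$, so that $E$ fixes $\mathrm{ran}(Q_1+Q_2)$ pointwise, $EA=A$ for every $A\in\mc{A}$, and $E\mc{A}E=\mc{A}E$ is multiplicatively closed for trivial reasons. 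I checked the linear algebra in the residual case and it is sound: $E_{21}=0$ comes from $Ee_1=e_1$ (you use this silently when writing the row-two relation, and should say so explicitly), then $(1-E_{22})r_2\in\mathrm{span}\{r_3,\dots,r_n\}$ forces $E_{22}=1$, the row-$i$ relations $E_{i2}r_2=r_i-\sum_{k\ge 3}E_{ik}r_k$ force $E_{i2}=0$ for $i\ge 3$, and the $(1,2)$-entry of $E^2=E$ gives $E_{12}=2E_{12}=0$. Your route buys a structural insight that the paper's computation hides, namely that under $EQ_1=Q_1$ the hypotheses of the two preparatory lemmas are not independent: either $EQ_2E$ already lies in $E\mc{A}E$, or $E(Q_1+Q_2)=Q_1+Q_2$ and the corner is an algebra outright. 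The cost is that your argument leans on Lemma~\ref{compressible whenever EQ_2E belongs to EAE}, and through it on Example~\ref{a family of compressible algebras}, whereas the paper's verification for this lemma is essentially self-contained.
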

	
	\begin{proof}

		Let $E$ be an idempotent in $\mm_n$ such that $EQ_1=Q_1$. Define $\mathcal{A}_1\coloneqq \cc (Q_1+Q_2),$  $\mathcal{A}_2\coloneqq Q_1\mm_nQ_2,$ and $\mathcal{A}_3\coloneqq (Q_1+Q_2)\mm_nQ_3,$ so that $\mathcal{A}=\mathcal{A}_1\dotplus\mathcal{A}_2\dotplus\mathcal{A}_3.$ As in the previous examples, we will show that $E\mc{A}E$ contains the product $E\mc{A}_iE\cdot E\mc{A}_jE$ for all $i$ and $j$.
		
		Since $\mc{A}_2$ and $\mc{A}_3$ are $\mc{LR}$-algebras, it is easy to see that $E\mc{A}E$ contains $(E\mc{A}_2E)^2$ and $(E\mc{A}_3E)^2$. Moreover, it is clear that $E\mathcal{A}_2E\cdot E\mathcal{A}_3E$ is contained in $E\mc{A}_3E$, and hence in $E\mathcal{A}E$. Observe that since the algebra $\mc{A}_0\coloneqq \mc{A}_1\dotplus\mc{A}_3$ was shown to be idempotent compressible in Example \ref{a family of compressible algebras}, we have that $E\mathcal{A}_1E\cdot E\mathcal{A}_3E$, $E\mathcal{A}_3E\cdot E\mathcal{A}_1E$, and $(E\mathcal{A}_1E)^2$ are contained in $E\mc{A}_0E\subseteq E\mc{A}E$. Proving these inclusions directly is also straightforward. 
		
		The equation $EQ_1=Q_1$ will now be used to obtain the remaining inclusions. We have that for all $S$ and $T$ in $\mm_n$,
		$$\begin{array}{rclr}E(Q_1+Q_2)SQ_3E\cdot EQ_1TQ_2E&=&0,\vspace{0.2cm}\\ 
		E(Q_1+Q_2)E\cdot EQ_1TQ_2E&=&EQ_1TQ_2E,\,\, \text{and}\vspace{0.2cm}\\
		EQ_1TQ_2E\cdot E(Q_1+Q_2)E&=&EQ_1(TQ_2E)Q_2E .\end{array}\smallskip$$
The right-hand side of each expression above is easily seen to belong to $E\mc{A}E$. As a result, $E\mc{A}E$ contains $E\mc{A}_3E\cdot E\mc{A}_2E$, $E\mc{A}_1E\cdot E\mc{A}_2E$, and $E\mc{A}_2E\cdot E\mc{A}_1E$, as claimed.
	\end{proof} 
	\smallskip
	
		\begin{exmp}\label{horrible family of idempotent compressible algebras}
	
		Let $n\geq 3$ be a positive integer, let $Q_1$ and $Q_2$ be mutually orthogonal rank-one projections in $\mm_{n}$, and define $Q_3\coloneqq I-Q_1-Q_2$. If $\mathcal{A}$ is the subalgebra of $\mm_n$ given by 
$$\begin{array}{l}\mathcal{A}\coloneqq \cc(Q_1+Q_2)+Q_1\mm_nQ_2+(Q_1+Q_2)\mm_nQ_3=\left\{\begin{bmatrix}
		\alpha & x & M_{13}\\
		0 & \alpha & M_{23}\\
		0 & 0 & 0
		\end{bmatrix}:\alpha,x\in\cc,M_{ij}\in Q_i\mm_nQ_j\right\},\end{array}$$
		then $\mathcal{A}$ is idempotent compressible. Consequently, its unitization
		$$\begin{array}{l}\tilde{\mathcal{A}}=\cc(Q_1+Q_2)+Q_1\mm_nQ_2+(Q_1+Q_2)\mm_{n}Q_3+\cc Q_3=\left\{\begin{bmatrix}
		\alpha & x & M_{13}\\
		0 & \alpha & M_{23}\\
		0 & 0 & \beta I
		\end{bmatrix}:\alpha,\beta,x\in\cc,M_{ij}\in Q_i\mm_nQ_j\right\}\end{array}$$
		is also idempotent compressible.
	
	\end{exmp}
	
	\begin{proof}
		
		In light of Lemmas \ref{compressible whenever EQ_2E belongs to EAE} and \ref{EQ_1=Q_1 lemma}, it suffices to prove that if $r\in\{2,3,\ldots, n-1\}$ and $E$ is an idempotent in $\mm_n$ of rank $r$, then either $EQ_2E\in E\mathcal{A}E$ or $EQ_1=Q_1$. 
		
		Fix such an integer $r$ and idempotent $E$. Let $\mc{B}=\left\{e_1,e_2,\ldots, e_n\right\}$ be an orthonormal basis for $\cc^n$ such that $e_1\in\ran(Q_1)$ and $e_2\in\ran(Q_2)$, and consider the projection
		$$P\coloneqq\sum_{i=1}^re_i\otimes e_i^*.$$
Since $\rank(P)=r$, there is an invertible matrix $S=(s_{ij})$ in $\mm_n$ such that $E=SPS^{-1}$.
		
		The product $EQ_2E$ belongs $E\mathcal{A}E$ if and only if there is an $A\in\mathcal{A}$ such that $$PS^{-1}(A-Q_2)SP=0.\smallskip$$ In showing this equality it suffices to exhibit an $A\in\mc{A}$ such that $(A-Q_2)SP=0$. To this end, observe that for any $A\in\mathcal{A}$, the operator $B\coloneqq A-Q_2$ admits the following matrix representation with respect to the basis $\mc{B}$: 		
		$$B=\begin{bmatrix}
  \alpha & w_2 & w_3 & \cdots &  w_n\\
  0 & \alpha-1 & v_3 & \cdots & v_n \\
  0 & 0 & 0 & \cdots & 0 \\
  \vdots & \vdots & \vdots & \ddots & \vdots \\
  0 & 0 & 0 & \cdots & 0 
  \end{bmatrix}.$$\smallskip
Since the last $n-2$ rows of $B$ and the last $n-r$ columns of $P$ are zero, the product $BSP$ is zero whenever $(BS)_{ij}=0$ for all $i\in\{1,2\}$ and $j\in\{1,2,\ldots, r\}$. That is, such a $B$ exists if there is a solution to the following non-homogeneous $2r\times 2(n-1)$ system of linear equations:
$$\begin{blockarray}{ccccccccccc}
w_2 & w_3 & \cdots & w_n & \alpha & v_3 & \cdots & v_n\\
\begin{block}{[cccccccc|cc]c}
  s_{21} & s_{31} & \cdots & s_{n1} & s_{11} & 0 & \cdots & 0 & 0\\
  s_{22} & s_{32} & \cdots & s_{n2} & s_{12} & 0 & \cdots & 0 & 0 \\
  \vdots & \vdots & \ddots & \vdots & \vdots & \vdots  & \ddots & \vdots & \vdots\\
  s_{2r} & s_{3r} & \cdots & s_{nr} & s_{1r} & 0  & \cdots & 0 & 0\\ 
  0 & 0 & \cdots & 0 & s_{21} & s_{31} &  \cdots & s_{n1} & s_{21}\\
  0 & 0 & \cdots & 0 & s_{22} & s_{32}  &  \cdots & s_{n2} & s_{22}\\
  \vdots & \vdots & \ddots & \vdots &  \vdots & \vdots & \ddots & \vdots & \vdots\\
  0 & 0 & \cdots & 0 & s_{2r} & s_{3r} & \cdots & s_{nr} & s_{2r}\vspace{0.1cm}\\ 
\end{block}
\end{blockarray}.$$
If the rank of the above (non-augmented) matrix is $2r$, then its columns span $\cc^{2r}$ and a solution exists. In this case, $EQ_2E$ belongs to $E\mc{A}E$, so $E\mathcal{A}E$ is an algebra by Lemma~\ref{compressible whenever EQ_2E belongs to EAE}.

Suppose that this is not the case, so the above (non-augmented) matrix has rank $<2r$. It is then apparent that 
$$S_0\coloneqq \begin{bmatrix}
s_{21} & s_{31} & \cdots & s_{n1}\\
s_{22} & s_{32} & \cdots & s_{n2}\\
\vdots & \vdots & \ddots & \vdots\\
s_{2r} & s_{3r} & \cdots & s_{nr}
\end{bmatrix}\smallskip$$
has rank $<r$. From here we will demonstrate that $EQ_1=Q_1$, or equivalently, that $PS^{-1}Q_1=S^{-1}Q_1$. 

To see that this is the case, note that if $S^{-1}=(t_{ij})$, then $t_{i1}=0$ for all $i>r$. Indeed,   
$$t_{i1}=\frac{C_{1i}}{\det(S)}\smallskip$$
 where $C_{ij}$ denotes the $(i,j)$-cofactor of $S$. When $i>r$, $C_{1i}$ is equal to $(-1)^{i+1}\det(M)$, where $M$ is an $(n-1)\times (n-1)$ matrix of the form 
$$M=\begin{bmatrix}
s_{21} & s_{22} & \cdots & s_{2r} & * & \cdots & *\\
s_{31} & s_{32} & \cdots & s_{3r} & * & \cdots & *\\
\vdots & \vdots & \ddots & \vdots & \vdots & \ddots & \vdots\\
s_{n1} & s_{n2} & \cdots & s_{nr} & * & \cdots & *
\end{bmatrix}.\smallskip$$
Since the $(n-1)\times r$ matrix obtained by keeping only the first $r$ columns of $M$ is exactly $S_0^T$ and $\mathrm{rank}(S_0)<r$, one has 
$$\mathrm{rank}(M)<r+(n-1-r)=n-1.\smallskip $$
Consequently, $t_{i1}=0$ for all $i>r$. A straightforward computation now shows that $PS^{-1}Q_1=S^{-1}Q_1$.
	\end{proof} 
\smallskip

	\subsection{Exceptional Subalgebras of $\mm_3$}\label{Subsection: Subalgebras of M3}
	
		In $\S\ref{Subsection: Subalgebras of Mn, n>=3}$ we introduced various examples of unital idempotent compressible subalgebras of $\mm_n$ for each integer $n\geq 3$. In \cite{ZCramerCompressibility} it will be shown that when $n\geq 4$, these examples are the only unital idempotent compressible subalgebras of $\mm_n$ up to similarity and transposition. In fact, we will see that for $n\geq 4$, our examples also represent all unital \textit{projection} compressible subalgebras of $\mm_n$ up to similarity and transposition.
		
		Unfortunately, the story for unital subalgebras of $\mm_3$ is somewhat more complicated. As we will see in this section, there exist several examples of unital idempotent compressible subalgebras of $\mm_3$ that are not accounted for in $\S\ref{Subsection: Subalgebras of Mn, n>=3}$. One explanation as to why these pathological examples arise is due to dimension. Just as $\mm_2$ is simply ``too small'' to contain the projections required to disprove the existence of the compression properties for any of its subalgebras, certain subalgebras of $\mm_3$ acquire the compression properties because $\mm_3$ does not contain projections of large enough rank. Support for this explanation is given by \cite[Theorem~2.0.5]{ZCramerCompressibility}, which demonstrates that in the case of $\mm_n$, $n\geq 4$, one can very often prove that an algebra lacks the compression properties using projections of rank $3$.\smallskip

			\begin{exmp}\label{T2 direct sum C is projection compressible example}
		
		Let $Q_1$, $Q_2$, and $Q_3$ be rank-one projections in $\mm_3$ that sum to $I$. If $\mathcal{A}$ is the subalgebra of $\mm_3$ defined by 
		$$\begin{array}{l}\mathcal{A}\coloneqq \cc Q_1+\cc Q_2+(Q_2+Q_3)\mm_3Q_3=\left\{\begin{bmatrix}
		\alpha & 0 & 0\\
		0 & \beta & x\\
		0 & 0 & \gamma
		\end{bmatrix}:\alpha,\beta,\gamma,x\in\cc\right\},\end{array}\smallskip$$
		then $\mathcal{A}$ is idempotent compressible.

	\end{exmp}

	\begin{proof}
	
		Define $\mathcal{A}_1\coloneqq \cc Q_1,$  $\mathcal{A}_2\coloneqq \cc Q_2,$ and $ \mathcal{A}_3\coloneqq (Q_2+Q_3)\mm_3 Q_3,$ so $\mathcal{A}=\mathcal{A}_1\dotplus \mathcal{A}_2\dotplus \mathcal{A}_3$. Let $E$ be an idempotent in $\mm_3$. We will show that $E\mc{A}E$ contains the product $E\mc{A}_iE\cdot E\mc{A}_j E$ for all $i$ and $j$. 
		
		For each $i\in\{1,2,3\}$, $\mc{A}_i$ is an $\mc{LR}$-algebra; hence $(E\mc{A}_iE)^2\subseteq E\mc{A}_iE\subseteq E\mc{A}E$. Moreover, since $EQ_2\mm_3Q_3E$ is contained in $E\mathcal{A}_3E$, we have that $E\mathcal{A}_2E\cdot E\mathcal{A}_3E\subseteq E\mathcal{A}E$ as well. These inclusions, together with the identities 
		\begin{align*}
		EQ_1E\cdot EQ_2E=EQ_1E-E&Q_1E\cdot EQ_1E-EQ_3E+EQ_2E\cdot EQ_3E+EQ_3E\cdot EQ_3E
		\end{align*}
		and 
		$$EQ_2E\cdot EQ_1E=EQ_2E-EQ_2E\cdot EQ_2E-EQ_2E\cdot EQ_3E,\smallskip$$
		
		\noindent demonstrate that $E\mathcal{A}_1E\cdot E\mathcal{A}_2E$ and $E\mathcal{A}_2E\cdot E\mathcal{A}_1E$ are contained in $E\mathcal{A}E$. Furthermore, if $T$ is an arbitrary element of $\mm_3$, then by writing  
		$$\begin{array}{l}
		EQ_1E\cdot E(Q_2+Q_3)TQ_3E=E(Q_2+Q_3)TQ_3E-E(Q_2+Q_3)E\cdot E(Q_2+Q_3)TQ_3E,\smallskip
		\end{array}$$
		it becomes apparent that $EQ_1E\cdot E(Q_2+Q_3)TQ_3E\in E\mc{A}E$. Consequently, $E\mathcal{A}_1E\cdot E\mathcal{A}_3E$ is contained in $E\mathcal{A}E$. 
		
		For the final inclusions, it will be helpful to first prove that $EQ_3E\cdot EQ_2E\in E\mathcal{A}E.$ Indeed, this is a consequence of the identity
		\begin{align*}
		EQ_3E\cdot EQ_2E=EQ_3E-E&Q_3E\cdot EQ_3E-EQ_1E+EQ_1E\cdot EQ_1E+EQ_2E\cdot EQ_1E
		\end{align*}
		and the inclusions established above. One may then apply Proposition~\ref{prop on rank 1 operators} to the rank-one operator $Q_3$ to deduce that $EQ_3\mm_3 Q_3E\cdot EQ_2E$ is contained in $E\mathcal{A}E$. Thus, for $T\in\mm_3$, we have that
		$$
		E(Q_2+Q_3)TQ_3E\cdot EQ_2E=EQ_2TQ_3E\cdot EQ_2E+EQ_3TQ_3E\cdot EQ_2E$$
	and
		\begin{align*}
		E(Q_2+Q_3)TQ_3E\cdot EQ_1E=E(Q_2+&Q_3)TQ_3E-E(Q_2+Q_3)TQ_3E\cdot EQ_3E\\
		&-EQ_2TQ_3E\cdot EQ_2E-EQ_3TQ_3E\cdot EQ_2E,
		\end{align*}
		
		 \noindent belong to $E\mc{A}E$. We conclude that $E\mc{A}E$ contains $E\mathcal{A}_3E\cdot E\mathcal{A}_2E$ and $E\mathcal{A}_3E\cdot E\mathcal{A}_1E$, and therefore $E\mathcal{A}E$ is an algebra.
	\end{proof} 
	\smallskip
	
	Proving the existence of the idempotent compression property for our next two examples will be somewhat more challenging. In the same spirit of the proof of Example \ref{horrible family of idempotent compressible algebras}, Examples \ref{second exceptional subalgebra example} and \ref{third exceptional subalgebra example} will each be preceded by two lemmas that highlight sufficient conditions for a corner of the algebra to be an algebra itself. We will then prove that all corners of these algebras must satisfy one of these two conditions.
	
	\begin{lem}\label{lemma 1 for exceptional M3 algebra 1}
	
		Let $Q_1$, $Q_2$, and $Q_3$ be rank-one projections in $\mm_3$ that sum to $I$. Let $\mc{A}$ be the subalgebra of $\mm_3$ defined by 
		$$\begin{array}{l}\mc{A}\coloneqq \cc(Q_1+Q_2)+\cc Q_3+Q_1\mm_3(Q_2+Q_3)=\left\{\begin{bmatrix}
		\alpha & x & y\\
		0 & \alpha & 0\\
		0 & 0 & \beta
		\end{bmatrix}:\alpha,\beta,x,y\in\cc\right\}.\end{array}$$
		If $E$ is an idempotent in $\mm_3$ such that $EQ_2E\in E\mc{A}E$, then $E\mc{A}E$ is an algebra.
		
	\end{lem}
	
	\begin{proof}
	
		Suppose that $E\in\mm_3$ is an idempotent such that $EQ_2E\in E\mc{A}E$, and define 
		$$\mc{A}_0\coloneqq \cc Q_1+\cc Q_2+\cc Q_3+Q_1\mm_3(Q_2+Q_3).\smallskip$$
		We have that 
		$$\begin{array}{l}
		E\mc{A}E=\cc E(Q_1+Q_2)E+\cc EQ_3E+EQ_1\mm_3(Q_2+Q_3)E\vspace{0.2cm}\\
\phantom{E\mc{A}E}=\cc EQ_1E+\cc EQ_2E+\cc EQ_3E+EQ_1\mm_3(Q_2+Q_3)E\vspace{0.2cm}\\
\phantom{E\mc{A}E}=E\mc{A}_0 E.
		\end{array}\smallskip$$
Since $\mc{A}_0^{aT}$ is the unital algebra from Example \ref{a family of compressible algebras 2}, $\mc{A}_0$ is idempotent compressible. Thus, $E\mc{A}_0 E=E\mc{A}E$ is an algebra.	
	\end{proof}
	\smallskip
	
		\begin{lem}\label{lemma 2 for exceptional M3 algebra 1}
	
		Let $Q_1$, $Q_2$, and $Q_3$ be rank-one projections in $\mm_3$ that sum to $I$. Let $\mc{A}$ be the subalgebra of $\mm_3$ defined by 
		$$\begin{array}{l}\mc{A}\coloneqq \cc(Q_1+Q_2)+\cc Q_3+Q_1\mm_3(Q_2+Q_3)=\left\{\begin{bmatrix}
		\alpha & x & y\\
		0 & \alpha & 0\\
		0 & 0 & \beta
		\end{bmatrix}:\alpha,\beta,x,y\in\cc\right\}.\end{array}\smallskip$$
		If $E$ is an idempotent in $\mm_3$ such that $EQ_1=Q_1$, then $E\mc{A}E$ is an algebra.
		
	\end{lem}
	
	\begin{proof}
	
		Let $E$ be an idempotent such that $EQ_1=Q_1$. Define $\mc{A}_1\coloneqq \cc (Q_1+Q_2)$, $\mc{A}_2\coloneqq \cc Q_3$, and $\mc{A}_3\coloneqq Q_1\mm_3(Q_2+Q_3),$ so that $\mc{A}=\mc{A}_1\dotplus\mc{A}_2\dotplus\mc{A}_3.$ To show that $E\mc{A}E$ is an algebra, we will verify that the product $E\mc{A}_iE\cdot E\mc{A}_jE$ is contained in $E\mc{A}E$ for all $i$ and $j$.
		
		Observe that $\mc{A}_2$ and $\mc{A}_3$ are $\mc{LR}$-algebras. Thus, $(E\mc{A}_iE)^2\subseteq E\mc{A}_iE\subseteq E\mc{A}E$ for each $i\in\{2,3\}$. Moreover, since \\
		$$\begin{array}{rcl}		
		E(Q_1+Q_2)E\cdot EQ_3E&=&EQ_3E-EQ_3E\cdot EQ_3E,\vspace{0.2cm} \\
		EQ_3E\cdot E(Q_1+Q_2)E&=&EQ_3E-EQ_3E\cdot EQ_3E,\,\,\,\,\,\,\text{and}\vspace{0.2cm}\\
		E(Q_1+Q_2)E\cdot E(Q_1+Q_2)E&=&E-2EQ_3E+EQ_3E\cdot EQ_3E,\,\,\,\,
		\end{array}\smallskip$$\\
		it follows that $E\mc{A}_1E\cdot E\mc{A}_2E$, $E\mc{A}_2E\cdot E\mc{A}_1E$, and $(E\mc{A}_1E)^2$ are all contained in $E\mc{A}E$. 
		
		For the remaining inclusions, note that for any $T\in\mm_3$,
		$$\begin{array}{l}
		EQ_1T(Q_2+Q_3)E\cdot E(Q_1+Q_2)E=EQ_1T(Q_2+Q_3)E \vspace{0.2cm}\\
		\left.\right.\hspace{6.7cm}-EQ_1T(Q_2+Q_3)E\cdot EQ_3(Q_2+Q_3)E
		\end{array}$$
		and 
		$$
		EQ_1T(Q_2+Q_3)E\cdot EQ_3E=EQ_1T(Q_2+Q_3)E\cdot EQ_3(Q_2+Q_3)E.\bigskip
		$$
		Consequently, $E\mc{A}_3E\cdot E\mc{A}_1E$ and $E\mc{A}_3E\cdot E\mc{A}_2E$ are contained in $E\mc{A}_3E\subseteq E\mc{A}E$. Finally, since $EQ_1=Q_1$ by hypothesis, we have that 
		$$\begin{array}{rrcll}\phantom{text{and}} & E(Q_1+Q_2)E\cdot EQ_1T(Q_2+Q_3)E&=&EQ_1T(Q_2+Q_3)E & \text{and}\vspace{0.2cm}\\
		& EQ_3E\cdot EQ_1T(Q_2+Q_3)E&=&0.\end{array}\smallskip$$
		This implies that $E\mc{A}E$ contains $E\mc{A}_1E\cdot E\mc{A}_3E$ and $E\mc{A}_2E\cdot E\mc{A}_3E$.
	\end{proof}
\smallskip

	\begin{exmp}\label{second exceptional subalgebra example}
	
		Let $Q_1$, $Q_2$, and $Q_3$ be rank-one projections in $\mm_3$ that sum to $I$. If $\mc{A}$ is the subalgebra of $\mm_3$ defined by 
		$$\begin{array}{l}\mc{A}\coloneqq \cc(Q_1+Q_2)+\cc Q_3+Q_1\mm_3(Q_2+Q_3)=\left\{\begin{bmatrix}
		\alpha & x & y\\
		0 & \alpha & 0\\
		0 & 0 & \beta
		\end{bmatrix}:\alpha,\beta,x,y\in\cc\right\},\end{array}\smallskip$$
		then $\mc{A}$ is idempotent compressible.
		
	\end{exmp}
	
	\begin{proof}
	
		It is obvious that $E\mc{A}E$ is an algebra whenever $E$ is an idempotent of rank $1$ or $3$. In light of Lemmas~\ref{lemma 1 for exceptional M3 algebra 1} and \ref{lemma 2 for exceptional M3 algebra 1}, it suffices to show that for every rank-two idempotent $E$ in $\mm_3$, either $EQ_2E$ belongs to $E\mc{A}E$ or $EQ_1=Q_1$. 
		
		To this end, suppose that $E$ is a rank-two idempotent in $\mm_3$ such that $EQ_2E$ does not belong to $E\mc{A}E$, and consider the projection $P\coloneqq (Q_1+Q_2)$. By rank considerations, there is an invertible matrix $S=(s_{ij})$ with inverse $S^{-1}=(t_{ij})$ such that $E=SPS^{-1}$. 
		
		Since $EQ_2E$ is not contained in $E\mc{A}E$, there is no $A\in\mc{A}$ that satisfies the equation $$SPS^{-1}(A-Q_2)SPS^{-1}=0.\smallskip$$ In particular, there is no $A\in\mc{A}$ such that $(A-Q_2)SP=0.$ Since every $A\in\mc{A}$ can be expressed as a matrix of the form 
		$$A=\begin{bmatrix}
		\alpha & x & y\\
		0 & \alpha & 0\\
		0 & 0 & \beta
		\end{bmatrix}\smallskip$$
		with respect to the decomposition $\cc^3=\ran(Q_1)\oplus\ran(Q_2)\oplus\ran(Q_3)$, it follows that there do not exist constants $\alpha, \beta, x, y\in\cc$ that solve the following system of linear equations:
		$$\left\{\begin{array}{ccl}
		\alpha s_{11}+xs_{21}+ys_{31}\phantom{+\beta s_{31}}&=&0\\
		\alpha s_{12}+xs_{22}+ys_{32}\phantom{+\beta s_{31}}&=&0\\
		\alpha s_{21}\phantom{+xs_{21}+ys_{31}+\beta s_{31}}&=&s_{21}\\
		\alpha s_{22}\phantom{+xs_{21}+ys_{31}+\beta s_{31}}&=&s_{22}\\
		\phantom{\alpha s_{22}+xs_{21}+ys_{31}+}\beta s_{31}&=&0\\
		\phantom{\alpha s_{22}+xs_{21}+ys_{31}+}\beta s_{32}&=&0
		\end{array}\right..\smallskip$$
		Note that if the determinant of 
		$S_0\coloneqq \begin{bmatrix} s_{21} & s_{31}\\ s_{22} & s_{32} \end{bmatrix}$ were non-zero, then a solution to the above system could be obtained by taking $\alpha=1$, $\beta=0$, and $x$ and $y$ such that $$x\begin{bmatrix} s_{21}\\ s_{22}\end{bmatrix}+y\begin{bmatrix} s_{31}\\ s_{32}
		\end{bmatrix}=\begin{bmatrix} -s_{11}\\ -s_{12}\end{bmatrix}.\smallskip$$ It must therefore be the case that $\det S_0=0.$
		
		We end the proof by showing that $EQ_1=Q_1$, or equivalently, that ${PS^{-1}Q_1=S^{-1}Q_1}$. It is easy to see that this equation holds when $t_{31}=0$. But if $C_{ij}$ denotes the $(i,j)$-cofactor of $S$, then indeed,
		$$t_{31}=\frac{C_{13}}{\det(S)}=\frac{\det(S_0^T)}{\det(S)}=0.$$
	\end{proof}
\smallskip

	\begin{lem}\label{lemma 1 for exceptional M3 algebra 2}
	
		Let $Q_1$, $Q_2$, and $Q_3$ be rank-one projections in $\mm_3$ that sum to $I$. Let $\mc{A}$ be the subalgebra of $\mm_3$ defined by 
		$$\mc{A}\coloneqq Q_1\mm_3(Q_2+Q_3)+Q_2\mm_3 Q_3+\cc I=\left\{\begin{bmatrix}
		\alpha & x & y\\
		0 & \alpha & z\\
		0 & 0 & \alpha
		\end{bmatrix}:\alpha,x,y,z\in\cc\right\}.$$
		If $E$ is an idempotent in $\mm_3$ such that $EQ_1E\in E\mc{A}E$, then $E\mc{A}E$ is an algebra.
		
	\end{lem}
	
	\begin{proof}
	
		Suppose that $E$ is an idempotent such that $EQ_1E\in E\mc{A}E$, and define
		$$\mc{A}_0\coloneqq \cc Q_1+\cc(Q_2+Q_3)+Q_1\mm_3(Q_2+Q_3)+Q_2\mm_3Q_3.\smallskip$$
		We have that 
		$$\begin{array}{l}
		E\mc{A}E=EQ_1\mm_3(Q_2+Q_3)E+EQ_2\mm_3Q_3E+\cc E\vspace{0.2cm}\\
		\phantom{E\mc{A}E}=EQ_1\mm_3(Q_2+Q_3)E+EQ_2\mm_3Q_3E+\cc EQ_1E+\cc E(Q_2+Q_3)E\vspace{0.2cm}\\
		\phantom{E\mc{A}E}=E\mc{A}_0 E.
		\end{array}\smallskip$$	
	Since $\mc{A}_0^{aT}$ is the unital algebra from Example \ref{horrible family of idempotent compressible algebras}, $\mc{A}_0$ is idempotent compressible. Thus, $E\mc{A}_0E=E\mc{A}E$ is an algebra.
	\end{proof}
	\smallskip
	
		\begin{lem}\label{lemma 2 for exceptional M3 algebra 2}
	
		Let $Q_1$, $Q_2$, and $Q_3$ be rank-one projections in $\mm_3$ that sum to $I$. 		Let $\mc{A}$ be the subalgebra of $\mm_3$ defined by 
		$$\mc{A}\coloneqq Q_1\mm_3(Q_2+Q_3)+Q_2\mm_3 Q_3+\cc I=\left\{\begin{bmatrix}
		\alpha & x & y\\
		0 & \alpha & z\\
		0 & 0 & \alpha
		\end{bmatrix}:\alpha,x,y,z\in\cc\right\}.$$
		If $E$ is an idempotent in $\mm_3$ such that $EQ_1=Q_1$, then $E\mc{A}E$ is an algebra.
		
	\end{lem}
	
	\begin{proof}
		
		Let $E$ be an idempotent such that $EQ_1=Q_1$. Define $\mc{A}_1\coloneqq Q_1\mm_3(Q_2+Q_3)$, $\mc{A}_2\coloneqq Q_2\mm_3Q_3$ and $\mc{A}_3\coloneqq \cc I,$ so that $\mc{A}=\mc{A}_1\dotplus\mc{A}_2\dotplus\mc{A}_3.$ Yet again, to show that $E\mc{A}E$ is an algebra, we will prove that the product $E\mc{A}_iE\cdot E\mc{A}_jE$ is contained in $E\mc{A}E$ for all $i$ and $j$.
		
		Observe that $E\mc{A}_iE\cdot E\mc{A}_jE$ is clearly contained in $E\mc{A}E$ when $i=3$ or $j=3$. Moreover, it is easy to see that $(E\mc{A}_1E)^2$ and $(E\mc{A}_2E)^2$ are contained in $E\mc{A}E$, as $\mc{A}_1$ and $\mc{A}_2$ are $\mc{LR}$-algebras. 
		
		Given $T,S\in\mm_3$, we have 
		$$EQ_1S(Q_2+Q_3)E\cdot EQ_2TQ_3E=EQ_1S(Q_2+Q_3)E\cdot EQ_2TQ_3(Q_2+Q_3)E,\smallskip$$
		so $E\mc{A}_1E\cdot E\mc{A}_2E$ is contained in $E\mc{A}_1E$, and hence in $E\mc{A}E$. Finally, we may use the fact that $EQ_1=Q_1$ to deduce that 
		$EQ_2SQ_3E\cdot EQ_1T(Q_2+Q_3)E=0,$
		and therefore $E\mc{A}_2E\cdot E\mc{A}_1E=\{0\}$.
	\end{proof}
	\smallskip
	
		\begin{exmp}\label{third exceptional subalgebra example}
	
		Let $Q_1$, $Q_2$, and $Q_3$ be rank-one projections in $\mm_3$ that sum to $I$. If $\mc{A}$ is the subalgebra of $\mm_3$ defined by 
		$$\mc{A}\coloneqq Q_1\mm_3(Q_2+Q_3)+Q_2\mm_3 Q_3+\cc I=\left\{\begin{bmatrix}
		\alpha & x & y\\
		0 & \alpha & z\\
		0 & 0 & \alpha
		\end{bmatrix}:\alpha,x,y,z\in\cc\right\},$$
		then $\mc{A}$ is idempotent compressible.
		
	\end{exmp}
	
	\begin{proof}
	
	It is obvious that $E\mc{A}E$ is an algebra whenever $E$ is an idempotent of rank $1$ or $3$. In light of Lemmas~\ref{lemma 1 for exceptional M3 algebra 2} and \ref{lemma 2 for exceptional M3 algebra 2}, it suffices to show that for every rank-two idempotent $E$ in $\mm_3$, either $EQ_1E$ belongs to $E\mc{A}E$, or $EQ_1=Q_1$. 
		
		To this end, suppose that $E$ is a rank-two idempotent in $\mm_3$ such that $EQ_1E$ does not belong to $E\mc{A}E$. Define $P\coloneqq (Q_1+Q_2)$, and let $S=(s_{ij})$ be an invertible matrix with inverse $S^{-1}=(t_{ij})$ satisfying $E=SPS^{-1}$.
		
		Since $EQ_1E$ is not contained in $E\mc{A}E$, then there is no $A\in\mc{A}$ satisfying the equation $$SPS^{-1}(A-Q_1)SPS^{-1}=0.\smallskip$$ In particular, there is no $A\in\mc{A}$ such that $(A-Q_1)SP=0.$ Since every $A\in\mc{A}$ can be expressed as a matrix of the form 
		$$A=\begin{bmatrix}
		\alpha & x & y\\
		0 & \alpha & z\\
		0 & 0 & \alpha
		\end{bmatrix}\smallskip$$
	with respect to the decomposition $\cc^3=\ran(Q_1)\oplus\ran(Q_2)\oplus\ran(Q_3)$, it follows that there do not exist constants $\alpha,x,y,z\in\cc$ that solve the following system of equations :
	$$\left\{\begin{array}{lcl}
	\alpha s_{11}+xs_{21}+ys_{31}\phantom{+zs_{21}}&=&s_{11}\\
	\alpha s_{12}+xs_{22}+ys_{32}\phantom{+zs_{21}}&=&s_{12}\\
	\alpha s_{21}\phantom{+xs_{22}+ys_{22}}
	\,\,\,\,+zs_{31}&=&0\\
	\alpha s_{22}\phantom{+xs_{22}+ys_{22}}\,\,\,\,+zs_{32}&=&0\\
	\alpha s_{31}\phantom{+xs_{22}+ys_{22}+zs_{32}}&=&0\\
	\alpha s_{32}\phantom{+xs_{22}+ys_{22}+zs_{32}}&=&0
	\end{array}\right..$$
			Observe, however, that if the determinant of
		$S_0\coloneqq \begin{bmatrix} s_{21} & s_{31}\\ s_{22} & s_{32} \end{bmatrix}$ were non-zero, then a solution could be obtained by taking $\alpha=z=0$, and $x$ and $y$ such that $$x\begin{bmatrix} s_{21}\\ s_{22}\end{bmatrix}+y\begin{bmatrix} s_{31}\\ s_{32}
		\end{bmatrix}=\begin{bmatrix} s_{11}\\ s_{12}\end{bmatrix}.\smallskip$$ It must therefore be the case that $\det S_0=0.$
		
		We are now prepared to show that $EQ_1=Q_1$, or equivalently, that $PS^{-1}Q_1=S^{-1}Q_1$. This equality is easily verified in the case that $t_{31}=0$. We have, however, that if $C_{ij}$ denotes the $(i,j)$ cofactor of $S$, then 
		$$t_{31}=\frac{C_{13}}{\det(S)}=\frac{\det(S_0^T)}{\det(S)}=0.$$
	\end{proof}
\smallskip
	
\section{Structure Theory for Matrix Algebras}\label{Section: Structure theory}

	In \S\ref{Section: Examples}, we introduced several families of unital algebras admitting the idempotent compression property. By Proposition~\ref{equivalent conditions for compressibility prop}, any algebra obtained by applying a transposition or similarity to one of these algebras also enjoys the idempotent compression property. It becomes interesting to ask whether or not this list is exhaustive. That is, is every unital idempotent compressible subalgebra of $\mm_n$ transpose similar to one of the idempotent compressible algebras we have encountered up to this point?  In order to decide whether or not additional examples exist, it will be necessary to establish a systematic approach to listing the unital subalgebras of $\mm_n$. Thus, this section will be devoted to recording a few key results concerning the structure theory for matrix algebras over $\cc$. The primary reference for this section is \cite{LMMRWedderburn}.
	
	Perhaps the most important result in this vein is the following theorem of Burnside \cite{Burnside},  which states that the only irreducible subalgebra of $\mm_n$ is the entire matrix algebra $\mm_n$ itself. See \cite{LomonosovRosenthal} for a simple proof.
\begin{thm}[Burnside's Theorem]

	If $\mathcal{A}$ is an irreducible algebra of linear transformations on a finite-dimensional vector space $\mathcal{V}$ over an algebraically closed field, then $\mathcal{A}$ is the algebra of all linear transformations on $\mathcal{V}$.

\end{thm}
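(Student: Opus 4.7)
The plan is to establish Burnside's theorem in three stages: (1) produce a rank-one operator inside $\mathcal{A}$; (2) use irreducibility to show $\mathcal{A}$ contains every rank-one operator on $\mathcal{V}$; and (3) conclude, since rank-one operators span $\mathrm{End}(\mathcal{V})$, that $\mathcal{A} = \mathrm{End}(\mathcal{V})$.

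For stage (1), I would select $T \in \mathcal{A}$ of minimal positive rank $r$ and argue $r = 1$ by contradiction. Suppose $r \geq 2$ and choose $x, y \in \mathcal{V}$ with $Tx, Ty$ linearly independent. First observe that $\{v \in \mathcal{V} : \mathcal{A}v = 0\}$ is $\mathcal{A}$-invariant, so by irreducibility it is $0$; consequently $\mathcal{A} \cdot Tx$ is a nonzero invariant subspace, hence equal to $\mathcal{V}$, and some $S \in \mathcal{A}$ satisfies $STx = y$. The endomorphism $TS$ restricted to the finite-dimensional space $\mathrm{range}(T)$ has an eigenvalue $\lambda$ by algebraic closure of the ground field. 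Then $TST - \lambda T \in \mathcal{A}$ factors through the proper subspace $(TS - \lambda I)(\mathrm{range}(T)) \subsetneq \mathrm{range}(T)$, so its rank is strictly less than $r$; minimality forces $TST = \lambda T$. Evaluating at $x$ gives $Ty = \lambda Tx$, contradicting linear independence. Hence $r = 1$.

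For stage (2), write the rank-one element as $v \otimes f$ and observe that $A(v \otimes f)B = (Av) \otimes (f \circ B)$ lies in $\mathcal{A}$ for all $A, B \in \mathcal{A}$. Irreducibility again yields $\mathcal{A}v = \mathcal{V}$; to obtain every covector on the right, consider the annihilator $W \coloneqq \{u \in \mathcal{V} : f(Bu) = 0 \text{ for all } B \in \mathcal{A}\}$, verify that $W$ is $\mathcal{A}$-invariant, and note that $W \neq \mathcal{V}$ since $f \neq 0$ and $\mathcal{A}\mathcal{V} = \mathcal{V}$. Irreducibility then forces $W = 0$, so $\{f \circ B : B \in \mathcal{A}\}$ spans $\mathcal{V}^{*}$. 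Every elementary tensor $u \otimes g$ therefore belongs to $\mathcal{A}$, and stage (3) follows since such tensors span $\mathrm{End}(\mathcal{V})$.

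The main obstacle is stage (1), specifically the reduction to minimal rank one. Both the minimum-rank selection trick and the extraction of an eigenvalue of $TS|_{\mathrm{range}(T)}$ are essential, and it is precisely in the latter step that algebraic closure enters in an irreplaceable way: without it the theorem fails outright, as illustrated by $\mathbb{C}$ acting $\mathbb{R}$-linearly and irreducibly on $\mathbb{R}^{2}$.
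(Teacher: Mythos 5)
Your argument is correct, and there is nothing in the paper to compare it against: Burnside's Theorem is quoted there as background (with a citation to Burnside's original paper) and is never proved in the manuscript. What you give is the standard modern proof — reduce to an element of minimal positive rank, use an eigenvalue of $TS$ on $\mathrm{range}(T)$ (this is exactly where algebraic closure enters, as you note) to force that minimal rank to be one, then use irreducibility on both sides of a rank-one element $v\otimes f$ to generate all elementary tensors, which span $\mathrm{End}(\mathcal{V})$. All the individual steps check out: $\{u:\mathcal{A}u=0\}$ and your $W$ are indeed invariant, $(TS-\lambda I)(\mathrm{range}(T))$ is a proper subspace of $\mathrm{range}(T)$ so $TST-\lambda T$ has rank strictly less than $r$, and $\{f\circ B: B\in\mathcal{A}\}$ is a subspace of $\mathcal{V}^{*}$ with zero pre-annihilator, hence all of $\mathcal{V}^{*}$. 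The only point worth making explicit is that the argument presumes $\mathcal{A}\neq\{0\}$ so that a minimal positive rank exists; this is automatic when $\dim\mathcal{V}\geq 2$ (otherwise any line would be a nontrivial invariant subspace), and the one-dimensional zero algebra is the usual degenerate case excluded by convention in statements of the theorem.
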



As a consequence of Burnside's Theorem, every proper subalgebra $\mc{A}$ of $\mm_n$ can be block upper triangularized with respect to some orthonormal basis for $\cc^n$. Since the diagonal blocks in this decomposition are themselves algebras, Burnside's Theorem may be applied to these blocks successively to obtain a maximal block upper triangularization of $\mc{A}$. 

\begin{defn}\cite[Definition 9]{LMMRWedderburn}\label{definition of reduced block upper triangular}
A subalgebra $\mathcal{A}$ of $\mm_n$ is said to have a \textit{reduced block upper triangular form} with respect to a decomposition $\cc^n=\mathcal{V}_1\dotplus \mathcal{V}_2\dotplus\cdots\dotplus\mathcal{V}_m$ if 
\begin{itemize}
	\item[(i)]when expressed as a matrix, each $A$ in $\mathcal{A}$ has the form  
$$A=\begin{bmatrix}
A_{11} & A_{12} & A_{13} & \cdots & A_{1m}\\
0 & A_{22} & A_{23} & \cdots & A_{2m}\\
0 & 0 & A_{33} & \cdots & A_{3m}\\
\vdots & \vdots & \vdots & \ddots & \vdots\\
0 & 0 & 0 & \cdots & A_{mm}
\end{bmatrix} \smallskip$$
with respect to this decomposition, and\smallskip

	\item[(ii)]for each $i$, the algebra $\mathcal{A}_{ii}\coloneqq \{A_{ii}:A\in\mathcal{A}\}$ is irreducible. That is, either $\mathcal{A}_{ii}=\{0\}$ and $\dim\mathcal{V}_i=1$, or $\mathcal{A}_{ii}=\mm_{\dim\mathcal{V}_i}$.

\end{itemize}

\end{defn}

If $\mc{A}$ is a reduced block upper triangular algebra and $A\in\mc{A}$, define the \textit{block-diagonal} of $A$ to be the matrix $BD(A)$ obtained by replacing the block-`off-diagonal' entries of $A$ with zeros. In addition, define the \textit{block-diagonal} of $\mc{A}$ to be the algebra
	$$BD(\mathcal{A})=\left\{BD(A):A\in\mathcal{A}\right\}.\smallskip$$
 
	By definition, the non-zero diagonal blocks of a reduced block upper triangular matrix algebra $\mc{A}$ are full matrix algebras. There may, however, exist dependencies among different diagonal blocks. That is, while it may be the case that any matrix of suitable size can be realized as a diagonal block for some element of $\mc{A}$, there is no guarantee that matrices for different blocks can be chosen at will simultaneously.  The following result states that any dependencies that occur among the diagonal blocks of $\mc{A}$ can be described in terms of dimension and similarity.

\begin{thm}\textup{\cite[Corollary 14]{LMMRWedderburn}}\label{Cor 14 from LMMR}
If a subalgebra $\mathcal{A}$ of $\mm_n$ has a reduced block upper triangular form with respect to a decomposition $\cc^n=\mathcal{V}_1\dotplus\mathcal{V}_2\dotplus\cdots\dotplus\mathcal{V}_m$, then the set $\{1,2,\ldots, m\}$ can be partitioned into disjoint sets $\Gamma_1,\Gamma_2,\ldots, \Gamma_k$ such that   
\begin{itemize}
	\item[(i)] If $i\in\Gamma_s$ and $\mc{A}_{ii}\neq\{0\}$, then there exists $G^{<i>}\in\mathcal{A}$ such that $G_{jj}^{<i>}=I_{\mathcal{V}_j}$ for all $j\in\Gamma_s$, and $G_{jj}^{<i>}=0$ for all $j\notin\Gamma_s$.  \smallskip
	
	\item[(ii)]If $i$ and $j$ belong to the same $\Gamma_s$, then $\dim\mathcal{V}_i=\dim\mathcal{V}_j$, and there is an invertible linear map 
	$S_{ij}:\mathcal{V}_i\rightarrow\mathcal{V}_j$ such that  
	$$A_{ii}=S_{ij}^{-1}A_{jj}S_{ij}\,\,\text{for all}\,\,A\in\mc{A}.\smallskip$$

	\item[(iii)]If $i$ and $j$ do not belong to the same $\Gamma_s$, then  
	$$\left\{(A_{ii},A_{jj}):A\in\mathcal{A}\right\}=\left\{A_{ii}:A\in\mathcal{A}\right\}\times\left\{A_{jj}:A\in\mathcal{A}\right\}.\smallskip$$

\end{itemize}
\end{thm}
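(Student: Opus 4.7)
The plan is to reduce the statement to the Wedderburn--Artin theorem applied to the block-diagonal algebra $BD(\mc{A})$. The key observation is that because each element of $\mc{A}$ is block upper triangular, the extraction map $BD \colon \mc{A} \to BD(\mc{A})$ is a surjective algebra homomorphism whose kernel $\mc{N}$ consists of the strictly block upper triangular elements of $\mc{A}$. This kernel is nilpotent (in fact $\mc{N}^m = 0$), and I would argue that $BD(\mc{A})$ itself is semisimple: it is a subdirect product of the simple-or-zero algebras $\mc{A}_{ii}$ through the canonical projections $\pi_i \colon BD(\mc{A}) \to \mc{A}_{ii}$, and the intersection of the maximal ideals $\ker \pi_i$ is trivial, so the Jacobson radical of $BD(\mc{A})$ vanishes.

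Applying Wedderburn--Artin, $BD(\mc{A}) \cong \prod_{s=1}^{k} \mm_{n_s}(\cc)$ for some positive integers $n_s$, with central idempotents $e_1, \ldots, e_k$ summing to the identity of $BD(\mc{A})$. For each index $i$ with $\mc{A}_{ii} \neq \{0\}$, the restriction $\pi_i|_{\mm_{n_s}}$ is a homomorphism between simple algebras, so it is either zero or an isomorphism (in the latter case forcing $n_s = d_i := \dim \mc{V}_i$). I would then define $\Gamma_s$ to be the set of indices $i$ for which $\pi_i|_{\mm_{n_s}}$ is non-zero. Since $\mc{A}_{ii}$ is simple, the images of distinct simple factors are orthogonal ideals of $\mc{A}_{ii}$, so exactly one factor maps non-trivially to $\mc{A}_{ii}$; the $\Gamma_s$ therefore partition the indices with $\mc{A}_{ii} \neq \{0\}$. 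Indices with $\mc{A}_{ii} = \{0\}$ can be distributed into singleton classes, where the conclusions hold vacuously.

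To verify the three assertions: for (i), the central idempotent $e_s$ satisfies $\pi_j(e_s) = I_{\mc{V}_j}$ for $j \in \Gamma_s$ and $\pi_j(e_s) = 0$ otherwise, so any lift of $e_s$ along $BD \colon \mc{A} \twoheadrightarrow BD(\mc{A})$ provides the required element $G^{<i>} \in \mc{A}$. For (ii), if $i, j \in \Gamma_s$ then $\pi_i|_{\mm_{n_s}}$ and $\pi_j|_{\mm_{n_s}}$ are two algebra isomorphisms onto $\mm_{n_s}$; by Skolem--Noether (equivalently, by uniqueness up to equivalence of the irreducible $n_s$-dimensional representation of $\mm_{n_s}(\cc)$) they differ by an inner automorphism, yielding an invertible $S_{ij} \colon \mc{V}_i \to \mc{V}_j$ such that $A_{ii} = S_{ij}^{-1} A_{jj} S_{ij}$ for every $A \in \mc{A}$. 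For (iii), when $i$ and $j$ belong to distinct classes $\Gamma_s$ and $\Gamma_t$, the maps $\pi_i$ and $\pi_j$ factor through different simple components of the direct product $BD(\mc{A})$; hence $\pi_i \oplus \pi_j$ surjects onto $\mc{A}_{ii} \times \mc{A}_{jj}$, which is precisely the Cartesian product appearing in the conclusion.

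The main obstacle is verifying that $BD(\mc{A})$ is semisimple; once this is in hand, the remainder is a standard unwinding of the Wedderburn decomposition. The subtlety is that a subalgebra of a semisimple algebra need not itself be semisimple, so the semisimplicity of $BD(\mc{A})$ must be extracted from the subdirect-product description rather than inherited from the ambient $\bigoplus_i \mc{A}_{ii}$. Everything else reduces to the observation that the dependencies between diagonal blocks of $\mc{A}$ are governed precisely by how the simple summands of $BD(\mc{A})$ embed into the individual factors $\mc{A}_{ii}$.
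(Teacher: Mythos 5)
Your proposal is sound, but note that the paper does not actually prove this statement: it is quoted, with citation, from \cite[Corollary 14]{LMMRWedderburn}, so the only meaningful comparison is with that source rather than with an in-paper argument. Your route --- observe that block upper triangularity makes $BD\colon\mc{A}\to BD(\mc{A})$ a surjective algebra homomorphism, show $BD(\mc{A})$ is semisimple from the subdirect-product description, decompose it by Wedderburn--Artin into simple ideals $B_1,\dots,B_k$ with central idempotents $e_s$, let $\Gamma_s$ collect the indices $i$ for which $\pi_i(B_s)\neq\{0\}$, and then obtain (i) by lifting $e_s$ through $BD$, (ii) by Skolem--Noether applied to $\pi_i|_{B_s}\circ(\pi_j|_{B_s})^{-1}$ (which also forces $\dim\mc{V}_i=\dim\mc{V}_j=n_s$), and (iii) from the surjectivity of $\pi_i\oplus\pi_j$ when the two indices see different simple summands --- is a complete and standard Wedderburn-theoretic derivation of the linkage structure, and every step checks out, including the handling of indices with $\mc{A}_{ii}=\{0\}$ as vacuous singleton classes.

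Two phrasings deserve tightening, though neither is a genuine gap. A nonzero homomorphism between simple algebras need not be an isomorphism (e.g.\ $\lambda\mapsto\lambda I_2$ embeds $\mm_1$ into $\mm_2$); what makes $\pi_i|_{B_s}$ onto $\mc{A}_{ii}$ is that its image is an ideal of the simple algebra $\mc{A}_{ii}$, which holds because $B_s$ is an ideal of $BD(\mc{A})$ and $\pi_i$ is surjective --- you invoke exactly this fact a sentence later, so simply use it in both places. Likewise, when $\mc{A}_{ii}=\{0\}$ the kernel $\ker\pi_i$ equals $BD(\mc{A})$ and is not a maximal ideal; the clean semisimplicity argument is that any nilpotent ideal of $BD(\mc{A})$ has nilpotent, hence zero, image in each $\mc{A}_{ii}$ (each being a full matrix algebra or zero) and therefore lies in $\bigcap_i\ker\pi_i=\{0\}$. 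Finally, the identity occurring in your Wedderburn decomposition is the unit of $BD(\mc{A})$, which need not be $I_n$; this is harmless, since (i) only demands identity blocks on the class $\Gamma_s$, where all blocks are nonzero by construction.
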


\begin{defn}

Let $\mathcal{A}$ be an algebra of the form described in Theorem~\ref{Cor 14 from LMMR}. Indices $i$ and $j$ are said to be \textit{linked} if they belong to the same $\Gamma_s$, and are said to be \textit{unlinked} otherwise. 

\end{defn}

It should be noted that if $\mc{A}$ is an algebra in reduced block upper triangular form and $S$ is an invertible matrix that is block upper triangular with respect to the same decomposition as that of $\mathcal{A}$, then $S^{-1}\mathcal{A}S$ has a reduced block upper triangular form with respect to this decomposition, and indices $i$ and $j$ are linked in $S^{-1}\mathcal{A}S$ if and only if they are linked in $\mathcal{A}$. 

The following Jordan-H\"{o}lder-type result describes the extent to which the reduced block upper triangular form of a subalgebra $\mc{A}$ of $\mm_n$ is unique.

\begin{thm}\textup{\cite[Theorem 23]{LMMRWedderburn}}
Suppose that a subalgebra $\mc{A}$ of $\mm_n$ has a reduced block upper triangular form with respect to a decomposition $\cc^n=\mc{V}_1\dotplus\mc{V}_2\dotplus\cdots\dotplus\mc{V}_k$, as well as with respect to a decomposition $\cc^n=\mc{W}_1\dotplus\mc{W}_2\dotplus\cdots\dotplus\mc{W}_m$. Then $k=m$ and there is a permutation $\pi$ on $\{1,2,\ldots, k\}$ such that 
\begin{itemize}
	\item[(i)]$i$ is linked to $j$ in the $\mc{V}$-decomposition if and only if $\pi(i)$ is linked to $\pi(j)$ in the $\mc{W}$-decomposition, and
	
	\item[(ii)]for each $i$ there is an invertible linear map $S_i:\mc{V}_i\rightarrow \mc{W}_{\pi(i)}$ such that $$\begin{array}{rl}
	\restr{A}{\mc{V}_i}=S_i^{-1}\restr{A}{\mc{W}_{\pi(i)}}S_i & \text{for all}\,\,A\in\mc{A}.\end{array}\smallskip$$
	
\end{itemize}

\end{thm}

The theorems presented above provide insight into the structure of the block-diagonal of a reduced block upper triangular matrix algebra $\mc{A}$. 
It will now be important to develop an understanding of the blocks that are located above the block-diagonal. 
%
%
%
%
%
%

Given a subalgebra $\mc{A}$ of $\mm_n$, it follows from \cite[Corollary 28]{LMMRWedderburn} that $\mc{A}$ decomposes as an algebraic direct sum $\mc{A}=\mc{S}\dotplus \rad$, where $\mc{S}$ is a semi-simple subalgebra of $\mc{A}$ and $\rad$ is the nil radical of $\mc{A}$. If $\mc{A}$ is in reduced block upper triangular form, then $\mc{S}$ is block upper triangular and $\rad$ consists of all strictly block upper triangular elements of $\mc{A}$ \cite[Proposition~19]{LMMRWedderburn}. Thus, the blocks above the block-diagonal are, in general, comprised of blocks from $\mc{S}$ and blocks from $\rad$. In the simplest scenario $\mc{S}$ is equal to $BD(\mc{A})$.

\begin{defn}
	Let $\mathcal{A}$ be a subalgebra of $\mm_n$ that has a reduced block upper triangular form with respect to some decomposition of $\cc^n$. 
	The algebra $\mathcal{A}$ is said to be \textit{unhinged} with respect to this decomposition if  
	$$\mathcal{A}=BD(\mathcal{A})\dotplus Rad (\mathcal{A}).\smallskip$$
\end{defn}

The following result indicates that if $\mc{A}$ is an algebra in reduced block upper triangular form with respect to some decomposition of $\cc^n$, then $\mc{A}$ can be unhinged with respect to this decomposition via conjugation by a block upper triangular similarity.

\begin{thm}\textup{\cite[Corollary 30]{LMMRWedderburn}}\label{every algebra is similar to an unhinged algebra} If a subalgebra $\mathcal{A}$ of $\mm_n$ has a reduced block upper triangular form with respect to a decomposition of $\cc^n$, then there exists an invertible linear operator $S$ that is block upper triangular with respect to the same decomposition as that of $\mc{A}$, and $S^{-1}\mathcal{A}S$ has an unhinged reduced block upper triangular form with respect to this decomposition.

\end{thm}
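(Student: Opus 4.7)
The plan is to realize the unhinged form via conjugation by a block upper triangular similarity lying in $I+\rad$. First I would invoke \cite[Corollary~28]{LMMRWedderburn} to write $\mc{A} = \mc{S}\dotplus\rad$ with $\mc{S}$ a semisimple subalgebra, and recall from \cite[Proposition~19]{LMMRWedderburn} that $\rad$ coincides with the set of strictly block upper triangular elements of $\mc{A}$. Therefore the block-diagonal projection $\pi\colon\mc{A}\to BD(\mc{A})$ is a surjective algebra homomorphism with kernel exactly $\rad$, and its restriction $\pi|_\mc{S}\colon\mc{S}\to BD(\mc{A})$ is an algebra isomorphism. Consequently, every element of $\mc{S}$ has a unique presentation $a+\eta(a)$ with $a\in BD(\mc{A})$ and $\eta(a)\in\rad$.

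The key reduction is that it suffices to find an invertible block upper triangular $S\in\mm_n$ with $S^{-1}\mc{S}S = BD(\mc{A})$; the conjugate algebra then splits as $S^{-1}\mc{A}S = BD(\mc{A})\dotplus S^{-1}\rad S$, and the second summand is the nil radical of the new algebra, delivering the required unhinged decomposition. I would search for $S$ of the form $I+R$ with $R\in\rad$, which is automatically block upper triangular and invertible by nilpotence of $R$. Using that $\pi|_\mc{S}^{-1}$ is multiplicative, the map $\eta\colon BD(\mc{A})\to \rad$ satisfies the twisted cocycle identity $\eta(ab) = a\eta(b)+\eta(a)b+\eta(a)\eta(b)$, and the desired conjugation condition $(I+R)^{-1}(a+\eta(a))(I+R) = a$ rearranges, for every $a\in BD(\mc{A})$, to the equation $\eta(a) = Ra - aR - \eta(a)R$.

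I would solve this equation iteratively along the nilpotent filtration $\rad\supseteq\rad^2\supseteq\cdots\supseteq\rad^k=\{0\}$. Writing $R = R_1 + R_2 + \cdots + R_{k-1}$ with $R_j\in\rad^j$, at the $j$-th stage the quadratic correction $\eta(a)R$ lies in $\rad^{j+1}$ and may be dropped, so one is reduced to producing $R_j\in\rad^j$ with $\eta_j(a) \equiv R_j a - a R_j \pmod{\rad^{j+1}}$ for the residual linear cocycle $\eta_j\colon BD(\mc{A})\to \rad^j/\rad^{j+1}$. Because $BD(\mc{A})$ is a semisimple $\cc$-algebra (a product of full matrix algebras, by Burnside's Theorem together with Theorem~\ref{Cor 14 from LMMR}), Whitehead's Lemma supplies the vanishing of the relevant first Hochschild cohomology, furnishing the required $R_j$; the process terminates in finitely many steps.

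The main obstacle is arranging this Whitehead-type argument so that each $R_j$ actually lies in $\rad^j\subseteq\mc{A}$, not merely in $\mm_n$, since this is precisely what forces $I+R_j$ to be a block upper triangular similarity preserving the ambient reduced structure. A concrete route is to work with the minimal central idempotents of $BD(\mc{A})$, which may be identified with elements of $\mc{S}\subseteq\mc{A}$ via $\pi|_\mc{S}^{-1}$; averaging $\eta_j$ against these idempotents produces an explicit coboundary representative whose generator $R_j$ is forced into $\rad^j$. Alternatively, one could avoid cohomology entirely by inducting on the number $m$ of diagonal blocks, peeling off the leading block, applying the inductive hypothesis to the $(m-1)$-block lower-right corner, and then handling the leading block by a single further block upper triangular conjugation that is dictated by the linking partition of Theorem~\ref{Cor 14 from LMMR}.
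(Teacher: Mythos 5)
The paper itself offers no proof of this statement---it is imported directly from \cite[Corollary~30]{LMMRWedderburn}---so your argument can only be measured against the standard Wedderburn--Malcev-type proof it gestures at. Your skeleton (split $\mc{A}=\mc{S}\dotplus\rad$, conjugate the semisimple part $\mc{S}$ onto $BD(\mc{A})$ by a unipotent block upper triangular matrix $I+R$, and solve the resulting cocycle equation degree by degree using vanishing of first Hochschild cohomology of the semisimple algebra $BD(\mc{A})$) is indeed that proof in outline. But there is a genuine error in where you force $\eta$ and $R$ to live, and as written the argument fails.

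For $s\in\mc{S}$ the element $\eta(\pi(s))=s-BD(s)$ is strictly block upper triangular, but it need not belong to $\mc{A}$ at all, hence need not lie in $\rad$, which by the cited Proposition~19 consists of the strictly block upper triangular elements \emph{of $\mc{A}$}. Worse, the conjugator cannot in general be taken in $I+\rad$: the algebras $\mc{D}_{rst}$ of Lemma~\ref{unitary orbit for bad algebra D} with $(r,s,t)\neq(0,0,0)$ are spanned by three commuting idempotents, hence semisimple with $\rad=\{0\}$, so your filtration $\rad\supseteq\rad^2\supseteq\cdots$ is trivial and the only available $I+R$ is $I$ itself; yet these algebras are in reduced block upper triangular form and hinged, and a nontrivial unipotent similarity is exactly what unhinges them. (Similarly, for $\mc{B}_{st}$ with $(s,t)\neq(0,0)$ one has $\rad=\cc\, e_1\otimes e_3^*$, and conjugation by $I+R$ with $R\in\rad$ acts as the identity map on $\mc{B}_{st}$.) Your ``main obstacle'' is therefore a phantom: the theorem asks only for a block upper triangular similarity, and $I+R$ is one for \emph{every} strictly block upper triangular $R\in\mm_n$; conjugation by any such matrix preserves the reduced form and the linking (as the paper notes after Theorem~\ref{Cor 14 from LMMR}), so membership of $R$ in $\mc{A}$ is irrelevant and must not be imposed. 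The repair is to work inside $BD(\mc{A})\dotplus\mc{N}$, where $\mc{N}$ denotes \emph{all} strictly block upper triangular matrices of $\mm_n$: there $\mc{S}$ and $BD(\mc{A})$ are two Wedderburn complements, $\eta$ takes values in $\mc{N}$, and your iteration should run along $\mc{N}\supseteq\mc{N}^2\supseteq\cdots$, with Whitehead's lemma applied to the $BD(\mc{A})$-bimodules $\mc{N}^j/\mc{N}^{j+1}$ to produce $R_j\in\mc{N}^j$. With that substitution (and the routine unitization needed when some diagonal block of $\mc{A}$ is $\{0\}$) your argument becomes correct; the same caveat applies to your alternative induction on the number of blocks, whose conjugators likewise need only be block upper triangular, not elements of $\mc{A}$.
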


We remark that the transformation of an algebra $\mc{A}$ into an unhinged reduced block upper triangular form as described in Theorem~\ref{every algebra is similar to an unhinged algebra} can be achieved via application of a block upper triangular similarity, but not, in general, via unitary equivalence. We also note that in the special case that $\mc{A}$ is in reduced block upper triangular form and $BD(\mc{A})=\cc I$, Theorem~\ref{every algebra is similar to an unhinged algebra} implies that $\mc{A}=\cc I+\rad$. Thus, $\mc{A}$ is unhinged with respect to any decomposition in which it admits a reduced block upper triangular form.

We conclude this section with a lemma concerning the independence of the blocks in $\rad$, when $\mc{A}$ is an algebra that is in unhinged reduced block upper triangular form. This result will be used throughout \S5.\smallskip

\begin{lem}\label{middle block unlinked implies good radical lem}

	Let $n$ be a positive integer, and let $\mc{A}$ be a unital subalgebra of $\mm_n$ in reduced block upper triangular form with respect to a decomposition $\bigoplus_{i=1}^m \mc{V}_i$ of $\cc^n$. Suppose that there is an index $k$, $1<k<m$, that is unlinked from all indices $i\neq k$. Let $Q_1, Q_2,$ and $Q_3$ denote the orthogonal projections onto $\bigoplus_{i<k}\mc{V}_i$, $\mc{V}_k$, and $\bigoplus_{i>k}\mc{V}_i$, respectively, and assume that $Q_i\rad Q_i=\{0\}$ for all $i$. 
	
	If $\mc{A}$ is unhinged with respect to $\bigoplus_{i=1}^m\mc{V}_i$, then $$\rad=Q_1\rad Q_2\dotplus Q_1\rad Q_3\dotplus Q_2\rad Q_3.$$
%
%
%

\end{lem}

\begin{proof}
	Assume that $\mc{A}$ is unhinged with respect to the above decomposition, and let $R$ be an element of $\rad$. Since $\mc{V}_k$ is unlinked from $\mc{V}_i$ for all $i\neq k$, the projection $Q_2$ belongs to $\mc{A}$. Thus, the operators $RQ_2=Q_1 RQ_2$ and $Q_2R=Q_2RQ_3$ belong to $\rad$, and hence so too does $R-Q_2R-RQ_2=Q_1 RQ_3$. We conclude that $$\rad=Q_1\rad Q_2\dotplus Q_1\rad Q_3\dotplus Q_2\rad Q_3,$$ as claimed.
\end{proof}

		\section{Compressibility in $\mm_3$}

	We now turn our attention to assessing the completeness of the list of idempotent compressible algebras established in $\S\ref{Section: Examples}$. That is, we wish to determine whether or not there exist additional examples of unital idempotent compressible algebras up to transpose similarity.
	
	Our findings in $\S\ref{Subsection: Subalgebras of M3}$ suggest that there may exist pathological examples of such algebras in $\mm_3$. For this reason, we devote this section to classifying the unital subalgebras in $\mm_3$ that admit the idempotent compression property, and reserve the classification of such subalgebras of $\mm_n$, $n\geq 4$, for \cite{ZCramerCompressibility}.
	
	Using the structure theory established in  $\S\ref{Section: Structure theory}$, we will show in $\S\ref{Section: idempotent compressible subalgebras in M3}$ that up to transposition and similarity, the only unital idempotent compressible subalgebras of $\mm_3$ are those constructed in \S2 and \S\ref{Section: Examples}. As a consequence of this analysis, we will observe that a unital subalgebra $\mc{A}$ of $\mm_3$ that lacks the idempotent compression property is necessarily transpose similar to one of the following algebras:
		$$\begin{array}{rcll}
		\mc{B}&\coloneqq& \left\{\begin{bmatrix}
		\alpha & x & 0\\ 0& \alpha & 0\\ 0 & 0 & \beta
		\end{bmatrix}:\alpha,\beta,x\in\cc\right\},\vspace{0.2cm}\\
		\mc{C}&\coloneqq& \left\{\begin{bmatrix}
		\alpha & x & y\\ 0& \alpha & x\\ 0 & 0 & \alpha
		\end{bmatrix}:\alpha,x,y\in\cc\right\}, \,\,\,\text{or}\vspace{0.2cm}\\
		\mc{D}&\coloneqq& \left\{\begin{bmatrix}
		\alpha & 0 & 0\\ 0& \beta & 0\\ 0 & 0 & \gamma
		\end{bmatrix}:\alpha,\beta,\gamma\in\cc\right\}.
		\end{array}$$

	This observation has interesting implications for \textit{projection} compressibility $\mm_3$. In particular, it leads to an avenue for proving that in the case of unital subalgebras of $\mm_3$, the notions of projection compressibility and idempotent compressibility coincide. For if there were a unital projection compressible subalgebra $\mc{A}$ of $\mm_3$ that did not exhibit the idempotent compression property, then $\mc{A}$ must be similar to $\mc{B}$, $\mc{C}$, or $\mc{D}$. Thus, one could establish the equivalence of these notions by proving that no algebra similar to $\mc{B}$, $\mc{C}$, or $\mc{D}$ is projection compressible. This approach will be used in \S\ref{Section: Pcomp = Ecomp in M3}.

	\subsection{Classification of Idempotent Compressibility}\label{Section: idempotent compressible subalgebras in M3}	
		
	Here we begin the classification of unital idempotent compressible subalgebras of $\mm_3$ up to transposition and similarity. By the results outlined in $\S\ref{Section: Structure theory}$, we may assume that our algebras $\mc{A}$ are expressed in reduced block upper triangular form with respect to an orthogonal decomposition of $\cc^3=\bigoplus_{i=1}^m\mc{V}_i$, and are  unhinged with respect to this decomposition. That is, we will assume that
	$$\mc{A}=BD(\mc{A})\dotplus\rad,\smallskip$$ where $\rad$ consists of all strictly block upper triangular elements of $\mc{A}$. With this in mind, the algebras in this list will be organized according to the configuration of their block-diagonal and the dimension of their radical.
	
	If $\mc{A}=\mm_3$, then $\mc{A}$ is clearly idempotent compressible. Furthermore, if some $\mc{V}_i$ has dimension $2$, then Theorem~\ref{structure of modules over Mn} implies that $\mc{A}$ is transpose equivalent to  
	$\cc\oplus\mm_2$ or $$\left\{\begin{bmatrix}
	a_{11} & a_{12} & a_{13}\\
	0 & a_{22} & a_{23}\\
	0 & a_{32} & a_{33}
	\end{bmatrix}:a_{ij}\in\cc\right\}.\smallskip$$
	In either case, $\mc{A}$ is the unitization of an $\mc{LR}$-algebra, and hence is idempotent compressible. 
	
	Thus, we may assume from here on that all spaces $\mc{V}_i$ have dimension $1$. For each $i$, let $e_i$ be a unit vector in $\mc{V}_i$, and let $Q_i$ denote the orthogonal projection onto $\mc{V}_i$. \\

\noindent\textbf{Case I:} $\dim BD(\mc{A})=3$. If $\dim BD(\mc{A})=3$, then the spaces $\mc{V}_1$, $\mc{V}_2$, and $\mc{V}_3$ are mutually unlinked. An application of Lemma~\ref{middle block unlinked implies good radical lem} then shows that
	$$\rad=Q_1\rad Q_2\dotplus Q_1\rad Q_3\dotplus Q_2\rad Q_3.\smallskip$$

	\begin{itemize}
	\item[(i)] If $\rad=\{0\}$, then $\mc{A}$ is unitarily equivalent to $\mc{D}$, one of the three algebras presented at the outset of \S5. It will be shown in Theorem~\ref{no algebra similar to D is projection compressible} that no algebra similar to $\mc{D}$ is projection compressible. In particular, $\mc{A}$ is not idempotent compressible.\\
	
	\item[(ii)] If $\dim\rad=1$, then there is exactly one pair of indices $(i,j)$ such that $i<j$ and $Q_i\rad Q_j$ is non-zero. In this case, $\mc{A}$ is unitarily equivalent to $$\cc Q_1+\cc Q_2+(Q_2+Q_3)\mm_3 Q_3,$$ the algebra described in Example \ref{T2 direct sum C is projection compressible example}. Consequently, $\mc{A}$ is idempotent compressible.\\
	
	\item[(iii)] If $\dim\rad=2$, then $Q_i\rad Q_j=\{0\}$ for exactly one pair of indices $(i,j)$ with $i<j$. By considering products of elements in $\rad$, one can show that $Q_1\rad Q_3$ is non-zero whenever both $Q_1\rad Q_2$ and $Q_2\rad Q_3$ are non-zero. This means that either $Q_1\rad Q_2=\{0\}$ or $Q_2\rad Q_3=\{0\}$; hence $\mc{A}$ is transpose equivalent to $$\cc Q_1+\cc Q_2+\cc Q_3+(Q_1+Q_2)\mm_3Q_3.\smallskip$$ This algebra was shown to admit the idempotent compression property in Example~\ref{a family of compressible algebras 2}. Therefore, $\mc{A}$ is idempotent compressible. \\
	
	\item[(iv)] If $\dim\rad=3$, then $\mc{A}$ is equal to $$\cc Q_1+\cc Q_3+(Q_1+Q_2)\mm_3(Q_2+Q_3),\smallskip$$ the unital algebra from Example \ref{a family of compressible algebras}. Consequently, $\mc{A}$ is idempotent compressible.\\
	
	\end{itemize}
		
\noindent \textbf{Case II:} $\dim BD(\mc{A})=2$. 	If $\dim BD(\mc{A})=2$, then exactly two of the spaces $\mc{V}_i$ and $\mc{V}_j$ are linked. By replacing $\mc{A}$ with $\mc{A}^{aT}$ if necessary, we may assume that $\mc{V}_1$ is one of the linked spaces.
\begin{itemize}
	\item[(i)]If $\rad=\{0\}$, then $\mc{A}$ is unitarily equivalent to $\cc(Q_1+Q_2)+\cc Q_3$, the unitization of the $\mc{LR}$-algebra $\cc Q_3$. Consequently, $\mc{A}$ is idempotent compressible.\\
	
	\item[(ii)]If $\dim\rad=1$, then $\rad=\cc R$ for some strictly upper triangular element 
	$$R=\begin{bmatrix}
	0 & r_{12} & r_{13}\\
	0 & 0 & r_{23}\\
	0 & 0 & 0
	\end{bmatrix}.\smallskip$$
	Since $R^2\in\rad$, we have that $R^2=\alpha R$ for some $\alpha\in\cc$. From this it follows that at least one of $r_{12}$ or $r_{23}$ is equal to zero.
	
	First consider the case in which $\mc{V}_2$ is not linked to $\mc{V}_1$. By Lemma~\ref{middle block unlinked implies good radical lem},
	$$\rad=Q_1\rad Q_2\dotplus Q_1\rad Q_3\dotplus Q_2\rad Q_3.\smallskip$$
	If $r_{12}=r_{13}=0$ or $r_{13}=r_{23}=0$, then $\mc{A}$ or $\mc{A}^{aT}$ is equal to $$\mc{A}=Q_2\mm_3(Q_2+Q_3)+\cc I.\smallskip$$ In this case, $\mc{A}$ is idempotent compressible as it is the unitization of an $\mc{LR}$-algebra. If instead $r_{12}=r_{23}=0$, then $\mc{A}$ is unitarily equivalent to $\mc{B}$, one of the three algebras described at the beginning of \S5. It will be shown in Theorem~\ref{no algebra similar to B is projection compressible} that no algebra similar to $\mc{B}$ is projection compressible. In particular, $\mc{A}$ is not idempotent compressible.
	
	
	 Now consider the case in which $\mc{V}_1$ is linked to $\mc{V}_2$. Since $\mc{V}_3$ is therefore unlinked from $\mc{V}_1$ and $\mc{V}_2$, one may argue as in the proof of Lemma~\ref{middle block unlinked implies good radical lem} to show that $$\rad=Q_1\rad Q_2\dotplus (Q_1+Q_2)\rad Q_3.\smallskip$$ If $r_{12}=0$, then $\mc{A}$ is unitarily equivalent to $$(Q_2+Q_3)\mm_3Q_3+\cc I.$$ In this case, $\mc{A}$ is idempotent compressible as it is the unitization of an $\mc{LR}$-algebra. If instead $r_{12}\neq 0$, then $r_{13}=r_{23}=0$ and hence $\mc{A}$ is unitarily equivalent to $\mc{B}$.\\

	\item[(iii)]Suppose now that $\dim\rad=2$. If $\mc{V}_2$ is the unlinked space, then 
	$$\rad=Q_1\rad Q_2\dotplus Q_1\rad Q_3\dotplus Q_2\rad Q_3.\smallskip$$ It then follows that either $Q_1\rad Q_2=\{0\}$ or $Q_2\rad Q_3=\{0\}$, so $\mc{A}$ is transpose equivalent to
	$$\cc(Q_1+Q_2)+\cc Q_3+Q_1\mm_3(Q_2+Q_3).\smallskip$$
	This algebra was shown to admit the idempotent compression property in Example~\ref{second exceptional subalgebra example}, and hence $\mc{A}$ is idempotent compressible.
	
	Now consider the case in which $\mc{V}_2$ is linked to $\mc{V}_1$. Since $\mc{V}_3$ is therefore unlinked from $\mc{V}_1$ and $\mc{V}_2$, we have that
	$$\rad=Q_1\rad Q_2\dotplus(Q_1+Q_2)\rad Q_3.\smallskip$$
	If $Q_1\rad Q_2=\{0\}$, then $$\mc{A}=\mm_3Q_3+\cc I.\smallskip$$ Consequently, $\mc{A}$ is idempotent compressible as it is the unitization of an $\mc{LR}$-algebra. If instead $Q_1\rad Q_2=Q_1\mm_3Q_2$, then $(Q_1+Q_2)\rad Q_3$ is $1$-dimensional. Thus, there is a non-zero matrix $R\in(Q_1+Q_2)\mm_3Q_3$ such that 
	$$\rad=Q_1\mm_3 Q_2\dotplus \cc R.\smallskip$$
	It is then easy to see that $\langle Re_3,e_2\rangle=0.$ For if not, $\rad$ would contain an element of the form $e_2\otimes e_3^*+te_1\otimes e_3^*$ for some $t\in\cc$; hence
	$$e_1\otimes e_3^*=\left(e_1\otimes e_2^*\right)\left(e_2\otimes e_3^*+te_1\otimes e_3^*\right)\in\rad\smallskip$$ 
	This would then imply that $\rad$ is $3$-dimensional---a contradiction. 
	
	Thus, $\langle Re_3,e_2\rangle=0,$ so $\mc{A}$ is equal to $$\cc(Q_1+Q_2)+\cc Q_3+Q_1\mm_3(Q_2+Q_3),\smallskip$$ the idempotent compressible algebra from Example \ref{second exceptional subalgebra example}. In all cases, $\mc{A}$ is idempotent compressible.\\
	
	\item[(iv)]Suppose that $\dim\rad=3$. If $\mc{V}_2$ is the unlinked space, then $\mc{A}$ is equal to $$(Q_1+Q_2)\mm_3(Q_2+Q_3)+\cc I.\smallskip$$ In this case $\mc{A}$ is the unitization of an $\mc{LR}$-algebra, and hence is idempotent compressible. If instead $\mc{V}_2$ is linked to $\mc{V}_1$, then $\mc{A}$ is equal to $$\cc(Q_1+Q_2)+\cc Q_3+Q_1\mm_3Q_2+(Q_1+Q_2)\mm_3 Q_3,\smallskip$$ the unital algebra described in Example \ref{horrible family of idempotent compressible algebras}. Consequently, $\mc{A}$ is idempotent compressible.\\

\end{itemize}

\noindent \textbf{Case III:} $\dim BD(\mc{A})=1$. Suppose now that $\dim BD(\mc{A})=1$, so that all spaces $\mc{V}_i$ are mutually linked. That is, $BD(\mc{A})=\cc I$.
\begin{itemize}
	\item[(i)] If $\rad=\{0\}$, then $\mc{A}=\cc I$. Clearly $\mc{A}$ is idempotent compressible.\smallskip
	
	\item[(ii)] If $\dim\rad=1$, then $\rad=\cc R$ for some strictly upper triangular matrix
	$$R=\begin{bmatrix}
	0 & r_{12} & r_{13}\\
	0 & 0 & r_{23}\\
	0 & 0 & 0
	\end{bmatrix}.$$
	As in part (ii) of the previous case, one can show that $r_{12}=0$ or $r_{23}=0$, so $R$ necessarily has rank~$1$. By replacing $\mc{A}$ with $\mc{A}^{aT}$ if necessary, we may assume that $r_{12}=0$. But then $\mc{A}$ is unitarily equivalent to $$Q_2\mm_3Q_3+\cc I,\smallskip$$ the unitization of an $\mc{LR}$-algebra. Thus, $\mc{A}$ is idempotent compressible.\smallskip
	
	\item[(iii)] Suppose that $\dim\rad=2$. If $Q_1\rad Q_2=\{0\}$ or $Q_2\rad Q_3=\{0\}$, then $\mc{A}$ or $\mc{A}^{aT}$ is equal to $$Q_1\mm_3(Q_2+Q_3)+\cc I.\smallskip$$ Thus, $\mc{A}$ is idempotent compressible as it is the unitization of an $\mc{LR}$-algebra.
	
	Now consider the case in which $\rad$ contains an element 
	$$R=\begin{bmatrix}
	0 & r_{12} & r_{13}\\
	0 & 0 & r_{23}\\
	0 & 0 & 0
	\end{bmatrix}\smallskip$$
	with $r_{12}\neq 0$ and $r_{23}\neq 0$. When this occurs, $\rad$ contains $\frac{1}{r_{12}r_{23}}R^2=e_1\otimes e_3^*$; hence 
	$$\rad=\mathrm{span}\left\{e_1\otimes e_2^*+re_2\otimes e_3^*, e_1\otimes e_3^*\right\}\smallskip$$
	where $r\coloneqq r_{23}/r_{12}.$ Consequently, $$\mc{A}=\left\{\begin{bmatrix} \alpha & x & y\\
	0 & \alpha & rx\\
	0 & 0 & \alpha\end{bmatrix}:\alpha,x,y\in\cc\right\},$$
	which is easily seen to be similar to the algebra $\mc{C}$ described at the outset of \S5. It will be shown in Theorem~\ref{no algebra similar to Cr is projection compressible theorem} that no algebra similar to $\mc{C}$ is projection compressible. In particular, $\mc{A}$ is not idempotent compressible.\smallskip
	
	\item[(iv)] If $\dim\rad=3$, then $\mc{A}$ is equal to 
	$$Q_1\mm_3(Q_2+Q_3)+Q_2\mm_3Q_3+\cc I,\smallskip$$
	the idempotent compressible algebra described in Example \ref{third exceptional subalgebra example}.\\

\end{itemize}

This concludes our classification of the unital idempotent compressible subalgebras of $\mm_3$. Our findings are summarized in the following theorem.

\begin{thm}\label{summarizing thm on compressibility in M3}
Let $\mc{A}$ be a unital subalgebra of $\mm_3$.
\begin{itemize}
	\item[(i)] $\mc{A}$ is idempotent compressible if and only if $\mc{A}$ is the unitization of an $\mc{LR}$-algebra or transpose similar to one of the following algebras:	\reqnomode
		\begin{align}
		&\left\{\begin{bmatrix}
		\alpha & x & y\\ 0& \beta & z\\ 0 & 0 & \gamma
		\end{bmatrix}:\alpha,\beta,\gamma,x,y,z\in\cc\right\}, \tag{Example~\ref{a family of compressible algebras}}\\
		&\left\{\begin{bmatrix}
		\alpha & 0 & x\\ 0& \beta & y\\ 0 & 0 & \gamma
		\end{bmatrix}:\alpha,\beta,\gamma,x,y\in\cc\right\},\phantom{\gamma,z}\tag{Example~\ref{a family of compressible algebras 2}}\\
		&\left\{\begin{bmatrix}
		\alpha & x & y\\ 0& \alpha & z\\ 0 & 0 & \beta
		\end{bmatrix}:\alpha,\beta,x,y,z\in\cc\right\},\tag{Example~\ref{horrible family of idempotent compressible algebras}}\\
		&\left\{\begin{bmatrix}
		\alpha & 0 & 0\\ 0& \beta & x\\ 0 & 0 & \gamma
		\end{bmatrix}:\alpha,\beta,\gamma,x\in\cc\right\},\tag{Example~\ref{T2 direct sum C is projection compressible example}}\\
		&\left\{\begin{bmatrix}
		\alpha & x & y\\ 0& \alpha & 0\\ 0 & 0 & \beta
		\end{bmatrix}:\alpha,\beta,x,y\in\cc\right\},\tag{Example~\ref{second exceptional subalgebra example}}\\
		&\left\{\begin{bmatrix}
		\alpha & x & y\\ 0& \alpha & z\\ 0 & 0 & \alpha
		\end{bmatrix}:\alpha,x,y,z\in\cc\right\};\tag{Example~\ref{third exceptional subalgebra example}}
		\end{align}
	

	\item[(ii)]$\mc{A}$ fails to admit the idempotent compression property if and only if $\mc{A}$ is transpose similar to one of the algebras $\mc{B}$, $\mc{C}$, or $\mc{D}$, presented at the outset of \S5.\smallskip
\end{itemize}
\end{thm}

Although the algebras presented in Theorem~\ref{summarizing thm on compressibility in M3}(i) may appear to share little in common beyond the idempotent compression property, there do exist other interesting characterizations of this collection. For instance, aside from the unitizations of $\mc{LR}$-algebras, the unital idempotent compressible algebras are exactly those that are not $3$-dimensional. This equivalence was observed by Ken Davidson.\smallskip

\begin{cor}\label{3-dimensional corollary}

	A unital subalgebra of $\mc{A}$ of $\mm_3$ is idempotent compressible if and only if $\mc{A}$ is the unitization of an $\mc{LR}$-algebra, or $\mc{A}$ is not $3$-dimensional. \smallskip

\end{cor}

In addition, one may observe that the unital algebras lacking the idempotent compression property are exactly those that are generated by a matrix in which every Jordan block corresponds to a distinct eigenvalue. Such matrices are said to be \textit{nonderogatory} \cite[Definition~1.4.4]{HornJohnson}. This equivalence was observed by Rajesh Pereira. \smallskip

\begin{cor}\label{non-derog corollary}

	A unital subalgebra of $\mm_3$ is idempotent compressible if and only if it is not singly generated by an invertible nonderogatory matrix.\smallskip

\end{cor}

As we will see in \S5.2, the algebras described in Theorem~\ref{summarizing thm on compressibility in M3}(i) also represent the complete list of unital \textit{projection} compressible subalgebras of $\mm_3$ up to transpose similarity.

\subsection{Equivalence of Idempotent and Projection Compressibility}\label{Section: Pcomp = Ecomp in M3}

Our final goal of this section is to show that no unital subalgebra of $\mm_3$ can possess the projection compression property without also possessing the idempotent compression property. If such an algebra did exist, it would necessarily be transpose similar to $\mc{B}$, $\mc{C}$, or $\mc{D}$ by the analysis in $\S\ref{Section: idempotent compressible subalgebras in M3}$. Thus, to show that the notions of projection compressibility and idempotent compressibility agree for unital subalgebras of $\mm_3$, it suffices to prove that no algebra similar to $\mc{B}$, $\mc{C}$, or $\mc{D}$ is projection compressible. This goal will be accomplished by first characterizing the algebras similar to $\mc{B}$, $\mc{C}$, or $\mc{D}$ up to unitary equivalence. 

\begin{lem}\label{unitary orbit for bad algebra B}

	Let $\mc{A}$ be a subalgebra of $\mm_3$. If $\mc{A}$ is similar to 
	$$\mc{B}=\left\{\begin{bmatrix}\alpha & x & 0\\
	0 & \alpha & 0\\
	0 & 0 & \beta\end{bmatrix}:\alpha,\beta,x\in\cc\right\},$$
	then there are constants $s,t\in\cc$ such that $\mc{A}$ is unitarily equivalent to 
	$$\mc{B}_{st}\coloneqq \left\{\begin{bmatrix}
	\alpha & s(\alpha-\beta) & x\\
	0 & \beta & t(\alpha-\beta)\\
	0 & 0 & \alpha
	\end{bmatrix}:\alpha,\beta,x\in\cc\right\}.$$
	
	\end{lem}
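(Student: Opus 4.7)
The plan is to exploit the abstract algebraic structure of $\mc{B}$---which $\mc{A}$ inherits under similarity---in order to locate a convenient orthonormal basis of $\cc^3$. Writing $\mc{A}=S^{-1}\mc{B}S$ for some invertible $S$, we immediately obtain a rank-two idempotent $E\in\mc{A}$ (the similar image of $\mathrm{diag}(1,1,0)$), a rank-one nilpotent $N\in\mc{A}$ (the similar image of $e_1\otimes e_2^*$) satisfying $EN=NE=N$, and a direct-sum decomposition $\mc{A}=\cc I\dotplus\cc E\dotplus\cc N$. Because $\mc{B}$ is commutative, so is $\mc{A}$; in particular $\mc{A}$ admits exactly two characters, say $\chi_\alpha$ with $E\mapsto 1,\ N\mapsto 0$, and $\chi_\beta$ with $E\mapsto 0,\ N\mapsto 0$.

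The key step is to identify a two-step flag $V_1\subset V_2\subset\cc^3$ of $\mc{A}$-invariant subspaces whose diagonal characters read $(\chi_\alpha,\chi_\beta,\chi_\alpha)$, matching the pattern visible in the definition of $\mc{B}_{st}$. I would set $V_1\coloneqq\ran(N)$ and $V_2\coloneqq\ran(N)+\ker(E)$. Commutativity of $\mc{A}$ makes both $\ran(N)$ and $\ker(E)$ invariant. The inclusion $\ran(N)\subseteq\ran(E)$ (which follows from $EN=N$) forces $V_1\cap\ker(E)=\{0\}$, so $V_2$ is a $2$-dimensional invariant subspace. A brief check shows that $E$ acts as the identity and $N$ as zero on $V_1$, while both act as zero on $V_2/V_1\cong\ker(E)$, confirming the required characters.

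With this flag in hand, apply Gram--Schmidt to build an orthonormal basis $(f_1,f_2,f_3)$ of $\cc^3$ with $f_1\in V_1$ and $\mathrm{span}\{f_1,f_2\}=V_2$, and let $U$ be the associated unitary. By construction every element of $U^*\mc{A}U$ is upper triangular with diagonal $(\alpha,\beta,\alpha)$. Imposing $E^2=E$ then determines the $(1,3)$-entry of $U^*EU$ in terms of the $(1,2)$- and $(2,3)$-entries, leaving two free parameters $a,c\in\cc$. The constraints $EN=NE=N$ together with $\rank N=1$ force $U^*NU$ to be a nonzero scalar multiple of $e_1\otimes e_3^*$, which may be rescaled to $e_1\otimes e_3^*$ itself. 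A direct expansion of $\alpha I+(\alpha-\beta)(U^*EU)+\nu(U^*NU)$ produces exactly the form of $\mc{B}_{st}$ with $s=a$ and $t=c$.

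The main obstacle I anticipate is precisely the choice of flag: a naive simultaneous triangularization could equally well yield diagonals in the order $(\alpha,\alpha,\beta)$ or $(\beta,\alpha,\alpha)$, neither of which fits the template $\mc{B}_{st}$. The deliberate selection of $V_1=\ran(N)$ (the unique invariant line on which $N$ necessarily vanishes and lies inside $\ran(E)$) and $V_2=V_1+\ker(E)$ (so that the quotient $V_2/V_1$ carries the isolated character $\chi_\beta$) is what forces the middle diagonal entry to be $\beta$; beyond this, everything reduces to routine linear-algebraic bookkeeping with two-by-two and three-by-three matrices.
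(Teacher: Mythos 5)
Your proof is correct, and it reaches the same endpoint as the paper by a differently organized (though equally elementary) route. The paper normalizes the nilpotent first: since $E_{12}^\prime = S^{-1}E_{12}S$ is a rank-one nilpotent of order two, some unitary $U$ places it at the $(1,3)$-corner, and then the relations $(E_{11}^\prime+E_{22}^\prime)E_{12}^\prime=E_{12}^\prime(E_{11}^\prime+E_{22}^\prime)=E_{12}^\prime$, idempotency, and trace $2$ force, entry by entry, the upper triangular form of $U^*(E_{11}^\prime+E_{22}^\prime)U$, with the third generator recovered as $I$ minus the idempotent. You instead choose the unitary from both generators at once, via the $\mc{A}$-invariant flag $\ran(N)\subset\ran(N)+\ker(E)$ (invariance coming from commutativity of $\mc{A}$), which builds the diagonal pattern $(\alpha,\beta,\alpha)$ into the triangularization from the outset; this gives a structural explanation of why the middle diagonal slot carries $\beta$ — exactly the point you identify as the crux — whereas the paper obtains the triangular shape of the idempotent by direct entrywise computation. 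Two small items to tidy, neither affecting validity: your character check covers $V_1$ and $V_2/V_1$ but not $\cc^3/V_2$; the third diagonal entries are nonetheless forced, since $U^*EU$ is a triangularized idempotent of rank (hence trace) $2$ whose first two diagonal entries are $1$ and $0$, and a triangularized nilpotent has zero diagonal. Also, the generic element should be expanded as $\beta I+(\alpha-\beta)\,U^*EU+\nu\,U^*NU$ rather than $\alpha I+(\alpha-\beta)\,U^*EU+\nu\,U^*NU$; your expression sweeps out the same set after relabelling the scalars, but the identification $s=a$, $t=c$ with the template $\mc{B}_{st}$ is immediate only with the corrected form.
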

	
	\begin{proof}
	
		If the matrices in $\mc{B}$ are expressed with respect to the standard basis $\{e_1,e_2,e_3\}$ for $\cc^3$, then $\mc{B}$ is spanned by $\left\{E_{11}+E_{22},E_{12},E_{33}\right\}$, where $E_{ij}\coloneqq e_i\otimes e_j^*.$ Thus, if $S$ is an invertible matrix in $\mm_3$ such that $\mc{A}=S^{-1}\mc{B}S$, then $\mc{A}$ is spanned by $\left\{E_{11}^\prime+E_{22}^\prime, E_{12}^\prime, E_{33}^\prime\right\},$ where $E_{ij}^\prime\coloneqq S^{-1}E_{ij}S.$ 
		
		Since $E_{12}^\prime$ is a rank-one nilpotent, there is a unitary $U\in\mm_3$ and a non-zero $y_0\in\cc$ such that 
		$$U^*E_{12}^\prime U=\begin{bmatrix}
		0 & 0 & y_0\\
		0 & 0 & 0\\
		0 & 0 & 0\end{bmatrix}.$$ Let $x_{ij}\in\cc$ be such that $U^*(E_{11}^\prime+E_{22}^\prime)U=(x_{ij})$. Using the fact that $$(E_{11}^\prime+E_{22}^\prime)E_{12}^\prime=E_{12}^\prime(E_{11}^\prime+ E_{22}^\prime)=E_{12}^\prime,\smallskip$$ one can show that $x_{21}=x_{31}=x_{32}=0$ and $x_{11}=x_{33}=1$. 
		Moreover, since $U^*(E_{11}^\prime+E_{22}^\prime)U$ is an idempotent
		of trace $2$,
		it follows that $x_{22}=0$ and $x_{13}=-x_{12}x_{23}$. Thus,
		$$U^*(E_{11}^\prime+E_{22}^\prime)U=\begin{bmatrix}
		1 & x_{12} & -x_{12}x_{23}\\
		0 & 0 & x_{23}\\
		0 & 0 & 1
		\end{bmatrix}.$$ Finally, we have that 
		$$U^*E_{33}^\prime U=I-U^*(E_{11}^\prime+E_{22}^\prime)U=\begin{bmatrix}
		0 & -x_{12} & x_{12}x_{23}\\
		0 & 1 & -x_{23}\\
		0 & 0 & 0
		\end{bmatrix}.$$
		As a result,  
		$$U^*\mc{A}U=\mathrm{span}\left\{\begin{bmatrix}
		1 & x_{12} & -x_{12}x_{23}\\
		0 & 0 & x_{23}\\
		0 & 0 & 1
		\end{bmatrix},\begin{bmatrix}
		0 & -x_{12} & x_{12}x_{23}\\
		0 & 1 & -x_{23}\\
		0 & 0 & 0
		\end{bmatrix},\begin{bmatrix}
		0 & 0 & y_0\\
		0 & 0 & 0\\
		0 & 0 & 0
		\end{bmatrix}\right\}=\mc{B}_{st},$$
		where $s\coloneqq x_{12}$ and $t\coloneqq x_{23}$.
	\end{proof}
	\smallskip
	
	\begin{thm}\label{no algebra similar to B is projection compressible}
	
		For any $s,t\in\cc$, the algebra $\mc{B}_{st}$ as in Lemma~\ref{unitary orbit for bad algebra B} is not projection compressible. Consequently, no algebra similar to $\mc{B}$ is projection compressible.
	
	\end{thm}
	
	\begin{proof}
	
	Consider the elements $A$ and $B$ of $\mc{B}_{st}$ given by 
	$$\begin{array}{ccc}
	A=\begin{bmatrix}
	1 & s & 0\\
	0 & 0 & t\\
	0 & 0 & 1
	\end{bmatrix} & \text{and} & B=\begin{bmatrix}
	0 & 0 & 1\\
	0 & 0 & 0\\
	0 & 0 & 0
	\end{bmatrix}.	
	\end{array}$$
	We will construct a matrix $P$ that is a multiple of a projection in $\mm_3$, and such that $(PAP)(PBP)$ does not belong to $P\mc{B}_{st} P$. To accomplish this goal, let $k$ be any element of $\rr\setminus\{0,s,t\},$ and define
		$$P\coloneqq \begin{bmatrix}
		k^2+1 & -k & -1\\
		-k & \phantom{-}2 & -k\\
		-1 & -k & k^2+1
		\end{bmatrix}.$$
		Note that $\frac{1}{k^2+2}P$ is a projection in $\mm_3$. 
		
		If $(PAP)(PBP)$ were an element of $P\mc{B}_{st} P$, there would exist a matrix $$C=\begin{bmatrix}	
		\alpha_0 & s(\alpha_0-\beta_0) & x_0\\
		0 & \beta_0 & t(\alpha_0-\beta_0)\\
		0 & 0 & \alpha_0
		\end{bmatrix}\in\mc{B}_{st}$$
		such that $PAPBP-PCP=(g_{ij})$ is equal to $0$. By examining the value of $g_{31}$, one can show that $$x_0=k(\alpha_0-\beta_0+1)(2k-s-t)+2(\alpha_0+1)+k^2\beta_0.\smallskip$$
		From here, direct computations reveal that
		$$(k-s)g_{11}-(k-t)g_{33}=k(k^2+2)(k-s)(k-t).\smallskip$$
		Since $g_{11}=g_{33}=0$, but the right-hand side of the above equation is non-zero by construction, we have reached a contradiction. Thus, there does not exist a $C$ as above, so $P\mc{B}_{st} P$ is not an algebra. The final claim is now a consequence of Lemma~\ref{unitary orbit for bad algebra B}.
	\end{proof}
	\smallskip
	
	\begin{lem}\label{unitary orbit for bad algebra Cr}
	
		Let $\mc{A}$ be a subalgebra of $\mm_3$. If $\mc{A}$ is similar to 
		$$\mc{C}\coloneqq \left\{\begin{bmatrix}
		\alpha & x & y\\ 0& \alpha & x\\ 0 & 0 & \alpha
		\end{bmatrix}:\alpha,x,y\in\cc\right\},$$
		then there is a non-zero constant $r\in\cc$ such that $\mc{A}$ is unitarily equivalent to $$\mc{C}_r\coloneqq \left\{\begin{bmatrix}
		\alpha & x & y\\ 0& \alpha & rx\\ 0 & 0 & \alpha
		\end{bmatrix}:\alpha,x,y\in\cc\right\}.$$
	
	\end{lem}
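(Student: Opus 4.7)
The plan is to reduce $\mc{A}$ to the desired form by exhibiting a single ``rank two nilpotent generator'' inside $\mc{A}$ and then Schur-triangularizing it with a unitary. Concretely, note that $\mc{C}$ is spanned by $\{I,N,N^2\}$ where $N=E_{12}+E_{23}$ is a nilpotent of index three. If $S$ is an invertible matrix with $\mc{A}=S^{-1}\mc{C}S$, then $\mc{A}$ is spanned by $\{I,N',(N')^2\}$ where $N'\coloneqq S^{-1}NS$. Crucially, $N'$ is again a nilpotent of index three (equivalently, a nilpotent of rank two), since similarity preserves both nilpotency and the nullity of successive powers.

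Next, I would apply Schur's theorem to obtain a unitary $U\in\mm_3$ with
\[
U^*N'U=\begin{bmatrix}0 & a & b\\ 0 & 0 & c\\ 0 & 0 & 0\end{bmatrix}
\]
for some $a,b,c\in\cc$. A direct computation shows
\[
(U^*N'U)^2=\begin{bmatrix}0 & 0 & ac\\ 0 & 0 & 0\\ 0 & 0 & 0\end{bmatrix},
\]
so the requirement $(N')^2\neq 0$ forces $ac\neq 0$.

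Now expand a general element of $U^*\mc{A}U=\mathrm{span}\{I,U^*N'U,(U^*N'U)^2\}$ as $\alpha I+\beta(U^*N'U)+\gamma(U^*N'U)^2$ and observe that its $(1,3)$-entry is $\beta b+\gamma ac$. Since $ac\neq 0$, the parameter $\gamma$ can be chosen to make this entry equal to any prescribed $y\in\cc$ independently of $\beta$. Setting $x\coloneqq \beta a$, the $(2,3)$-entry becomes $\beta c=(c/a)x$, so writing $r\coloneqq c/a\in\cc\setminus\{0\}$ gives
\[
U^*\mc{A}U=\left\{\begin{bmatrix}\alpha & x & y\\ 0 & \alpha & rx\\ 0 & 0 & \alpha\end{bmatrix}:\alpha,x,y\in\cc\right\}=\mc{C}_r,
\]
as required.

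The steps are essentially routine once one realizes the key observation: although the Schur form of $N'$ introduces the extra parameter $b$, this entry is absorbed by the freedom in the $(1,3)$-entry contributed by $(N')^2$, leaving only the ratio $r=c/a$ as a genuine invariant. The mildest obstacle is therefore verifying this absorption cleanly and confirming that $r\neq 0$ follows from $(N')^2\neq 0$, both of which are immediate from the matrix computations above.
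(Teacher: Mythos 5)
Your proof is correct. It differs from the paper's argument in a small but genuine way: the paper spans $\mc{C}$ by $\{I,N_1,N_2\}$ with $N_1=E_{13}$ a rank-one nilpotent and $N_2=E_{12}+E_{23}$ a rank-two nilpotent, notes that the conjugated generators $N_1',N_2'$ commute (indeed $N_1'N_2'=N_2'N_1'=0$), simultaneously unitarily triangularizes the pair, and then uses rank considerations together with the vanishing products to force the off-diagonal entries of the rank-one piece to sit only in the $(1,3)$ slot, arriving at $r=b_{23}/b_{12}$. You instead exploit the fact that $\mc{C}$ is singly generated: $E_{13}=N^2$, so $\mc{C}=\mathrm{span}\{I,N,N^2\}$ and hence $\mc{A}=\mathrm{span}\{I,N',(N')^2\}$, and a single Schur triangularization of the index-three nilpotent $N'$ suffices. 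The conditions the paper extracts from the product relations become automatic in your setup: $(N')^2\neq 0$ gives $ac\neq 0$, and the stray entry $b$ is absorbed into the free $(1,3)$-coordinate coming from $\gamma(N')^2$, leaving $r=c/a$ as the only invariant. Your route is slightly more economical, since it avoids simultaneous triangularization of commuting matrices and the attendant bookkeeping with two triangular forms, while the paper's version makes the role of the two radical generators (and their ratio) explicit; both yield the same conclusion, and your surjectivity check (or, alternatively, a dimension count, since $I,N',(N')^2$ are linearly independent) closes the set equality correctly.
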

	
	\begin{proof}
	
		Observe that $\mc{C}$ is spanned by $\left\{I,N_1,N_2\right\}$, where 
		$$\begin{array}{ccc}
		N_1=\begin{bmatrix}
		0 & 0 & 1\\
		0 & 0 & 0\\
		0 & 0 & 0
		\end{bmatrix} & \text{and} & N_2=\begin{bmatrix}
		0 & 1 & 0\\
		0 & 0 & 1\\
		0 & 0 & 0
		\end{bmatrix}
		\end{array}.$$
		Thus, if $S\in\mm_3$ is an invertible matrix such that $\mc{A}=S^{-1}\mc{C} S$, then $\mc{A}$ is spanned by $\left\{I,N_1^\prime, N_2^\prime\right\},$ where $N_i^\prime=S^{-1}N_iS$ for $i\in\{1,2\}$.
		
		 It is evident that $N_1^\prime$ and $N_2^\prime$ are nilpotent operators of rank $1$ and $2$, respectively, and $N_1^\prime N_2^\prime =N_2^\prime N_1^\prime =0$. In particular, since $N_1^\prime$ and $N_2^\prime$ commute, there is a unitary $U\in\mm_3$ such that $U^*N_1^\prime U$ and $U^*N_2^\prime U$ are upper triangular. If $a_{ij}$ and $b_{ij}$ are such that 
		$$\begin{array}{ccc}
		U^*N_1^\prime U=\begin{bmatrix}
		0 & a_{12} & a_{13}\\
		0 & 0 & a_{23}\\
		0 & 0 & 0
		\end{bmatrix} & \text{and} & U^*N_2^\prime U=\begin{bmatrix}
		0 & b_{12} & b_{13}\\
		0 & 0 & b_{23}\\
		0 & 0 & 0
		\end{bmatrix},
		\end{array}$$
		then rank considerations imply that neither $b_{12}$ nor $b_{23}$ is equal to $0$.
		But $$\begin{array}{ccc}
		(U^*N_1^\prime U)(U^*N_2^\prime U)=\begin{bmatrix}
		0 & 0 & a_{12}b_{23}\\
		0 & 0 & 0\\
		0 & 0 & 0
		\end{bmatrix} & \text{and} & (U^*N_2^\prime U)(U^*N_1^\prime U)=\begin{bmatrix}
		0 & 0 & a_{23}b_{12}\\
		0 & 0 & 0\\
		0 & 0 & 0
		\end{bmatrix},
		\end{array}$$
		so it must be that $a_{12}=a_{23}=0$. By setting $r=b_{23}/b_{12}$, it follows that 
		$$U^*\mc{A}U=\mathrm{span}\left\{I,\begin{bmatrix}
		0 & 0 & 1\\
		0 & 0 & 0\\
		0 & 0 & 0
		\end{bmatrix},\begin{bmatrix}
		0 & 1 & b_{13}/b_{12}\\
		0 & 0 & r\\
		0 & 0 & 0
		\end{bmatrix}\right\}=\mc{C}_r.$$
	\end{proof}

	\begin{thm}\label{no algebra similar to Cr is projection compressible theorem}
	
		For every non-zero $r\in\cc$, the algebra $\mc{C}_r$ as in Lemma~\ref{unitary orbit for bad algebra Cr} is not projection compressible. Consequently, no algebra similar to $\mc{C}$ is projection compressible.	
	
	\end{thm}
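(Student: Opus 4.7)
The plan is to argue in direct parallel with the proof of Theorem~\ref{no algebra similar to B is projection compressible}. By Lemma~\ref{unitary orbit for bad algebra Cr}, any algebra similar to $\mc{C}$ is unitarily equivalent to some $\mc{C}_r$ with $r\in\cc\setminus\{0\}$, and since projection compressibility is invariant under unitary equivalence (Proposition~\ref{equivalent conditions for compressibility prop}), it suffices to show that each individual $\mc{C}_r$ fails to be projection compressible. To do this, I would exhibit elements $A,B\in\mc{C}_r$ together with a rank-two scalar multiple $P$ of a projection such that $PAP\cdot PBP$ cannot be written as $PCP$ for any $C\in\mc{C}_r$.

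For the choice of test elements, the natural candidates are the generators of $\mc{C}_r$ beyond the identity, for example
$$A=\begin{bmatrix} 0 & 1 & 0\\ 0 & 0 & r\\ 0 & 0 & 0\end{bmatrix} \quad\text{and}\quad B=\begin{bmatrix} 0 & 0 & 1\\ 0 & 0 & 0\\ 0 & 0 & 0\end{bmatrix},$$
or perhaps $A=I+N_2$ and $B=N_2$, with $N_2$ the rank-two nilpotent above. These already capture the defining linear relation $(C)_{23}=r(C)_{12}$ of $\mc{C}_r$ and so should force a non-trivial condition on the compressed products. For the projection, I would take a one-real-parameter family, for instance the scalar multiple of
$$P=\begin{bmatrix} k^2+1 & -k & -1\\ -k & 2 & -k\\ -1 & -k & k^2+1\end{bmatrix}$$
used in Theorem~\ref{no algebra similar to B is projection compressible}, with $k\in\rr$ chosen outside a finite exceptional set (depending on $r$).

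Assuming toward contradiction that $PAP\cdot PBP=PCP$ for some
$$C=\begin{bmatrix} \alpha & x & y\\ 0 & \alpha & rx\\ 0 & 0 & \alpha\end{bmatrix}\in\mc{C}_r,$$
the equation $G\coloneqq PAPBP-PCP=0$ yields a system of polynomial equations in the three unknowns $\alpha, x, y$ with coefficients depending on $k$ and $r$. The plan is to solve three of these equations (for instance those arising from the $(3,1)$-, $(3,2)$-, and $(2,1)$-entries) for $\alpha$, $x$, and $y$, then substitute the resulting expressions into a carefully chosen linear combination of two other entries of $G$, mirroring the combination $(k-s)g_{11}-(k-t)g_{33}$ used in the proof for $\mc{B}_{st}$. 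This should yield a non-trivial polynomial identity in $k$ and $r$ which fails for all but finitely many real $k$, producing the required contradiction. The conclusion about arbitrary algebras similar to $\mc{C}$ then follows from Lemma~\ref{unitary orbit for bad algebra Cr}.

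The main obstacle is the bookkeeping. The forced relation $(C)_{23}=r(C)_{12}$ means that two off-diagonal entries of $C$ are tied together by the scalar $r$, so one must choose a linear combination of entries of $G$ that genuinely reflects this asymmetry rather than one that is automatically satisfied by the full upper triangular algebra. I expect that the correct combination will produce an identity of the form (polynomial in $k$)$=c(r)\cdot k^m$ with $c(r)\neq 0$ for $r\neq 0$, and then avoiding the finite zero set of this polynomial in $k$ completes the argument.
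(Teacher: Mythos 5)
Your plan is essentially the paper's own argument: the paper uses exactly your first choice of $A$ and $B$, the fixed projection multiple $P$ with all off-diagonal entries $-1$ (the $k=1$ member of your family), solves $g_{31}=0$ for $y_0$, and then obtains the contradiction from the combination $g_{21}-rg_{32}=3r\neq 0$. So no one-parameter family or genericity argument in $k$ is needed; the single projection already works, and the rest of your outline matches the paper's proof.
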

	
	\begin{proof}
	
		Consider the elements $A,B\in\mc{C}_r$ given by 
		$$\begin{array}{ccc}
		A=\begin{bmatrix}
		0 & 1 & 0\\
		0 & 0 & r\\
		0 & 0 & 0
		\end{bmatrix} & \text{and} & B=\begin{bmatrix} 0 & 0 & 1\\
		0 & 0 & 0\\
		0 & 0 & 0 \end{bmatrix}.
		\end{array}$$ Furthermore, define the matrix $$P\coloneqq \begin{bmatrix}
	\phantom{-}2 & -1 & -1\\
	-1 & \phantom{-}2 & -1\\
	-1 & -1 & \phantom{-}2
	\end{bmatrix},\smallskip$$
	so $\frac{1}{3}P$ is a projection in $\mm_3$. 
	
	We claim that $(PAP)(PBP)$ does not belong to $P\mc{C}_rP$. For if it did, there would exist an element 
	$$C=\begin{bmatrix}
	\alpha_0 & x_0 & y_0\\
	0 & \alpha_0 & rx_0\\
	0 & 0 & \alpha_0
	\end{bmatrix}\smallskip$$ in $\mc{C}_r$ such that $PAPBP-PCP=(g_{ij})$ is equal to $0$. Direct computations show that $$0=g_{31}=3\alpha_0-(x_0+1)(r+1)-y_0,\smallskip$$ hence 
	$y_0=3\alpha_0-(x_0+1)(r+1).$
From here, further calculations reveal that $g_{21}-rg_{32}=3r.$ Since $g_{21}=g_{32}=0$ but $r\neq 0$, we have reached a contradiction. Thus, there does not exist an element $C\in\mc{C}_r$ as described above. This shows that $(PAP)(PBP)\notin P\mc{C}_rP$, so $\mc{C}_r$ is not projection compressible. The final claim is now immediate from Lemma~\ref{unitary orbit for bad algebra Cr}.
	\end{proof}
	\smallskip
	
	\begin{lem}\label{unitary orbit for bad algebra D}
	
		Let $\mc{A}$ be a subalgebra of $\mm_3$. If $\mc{A}$ is similar to 
		$$\mc{D}=\left\{\begin{bmatrix}
		\alpha & 0 & 0\\
		 0 & \beta & 0\\
		 0 & 0 & \gamma
\end{bmatrix}:\alpha,\beta,\gamma\in\cc\right\},$$
then there are constants $r,s,t\in\cc$ such that $\mc{A}$ is unitarily equivalent to 
$$\mc{D}_{rst}\coloneqq \left\{\begin{bmatrix}
		\alpha & r(\alpha-\beta) & s(\alpha-\gamma)-rt(\gamma-\beta)\\
		 0 & \beta & t(\gamma-\beta)\\
		 0 & 0 & \gamma
\end{bmatrix}:\alpha,\beta,\gamma\in\cc\right\}.$$
	
	\end{lem}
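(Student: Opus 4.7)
The plan is to parallel the arguments of Lemma~\ref{unitary orbit for bad algebra B} and Lemma~\ref{unitary orbit for bad algebra Cr}: identify a distinguished spanning set of $\mc{D}$, transport it through the given similarity, and then normalize via a unitary. With respect to the standard basis $\{e_1,e_2,e_3\}$ the algebra $\mc{D}$ is spanned by the three rank-one idempotents $E_{ii} = e_i \otimes e_i^*$ for $i=1,2,3$. These are commuting, mutually orthogonal (in the ring-theoretic sense $E_{ii}E_{jj}=0$ for $i\neq j$), and sum to $I$. Consequently, if $S \in \mm_3$ is invertible with $\mc{A} = S^{-1}\mc{D}S$, then $\mc{A}$ is spanned by $E_{ii}' \coloneqq S^{-1}E_{ii}S$ for $i=1,2,3$, and these three matrices inherit all of the above algebraic relations.

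First I would simultaneously upper-triangularize the commuting family $\{E_{11}', E_{22}', E_{33}'\}$ by a unitary $U \in \mm_3$. Since each $E_{ii}'$ is a rank-one idempotent, its diagonal (after triangularization) has exactly one $1$ and two $0$'s. After composing $U$ with a permutation unitary if necessary, I may arrange the ordering so that $U^*E_{ii}'U$ has its nonzero diagonal entry in position $(i,i)$. Writing
\[
F \coloneqq U^*E_{11}'U = \begin{bmatrix} 1 & a & b \\ 0 & 0 & c \\ 0 & 0 & 0 \end{bmatrix}, \quad
G \coloneqq U^*E_{22}'U = \begin{bmatrix} 0 & p & q \\ 0 & 1 & w \\ 0 & 0 & 0 \end{bmatrix}, \quad
H \coloneqq U^*E_{33}'U = I - F - G,
\]
I then impose the relations $F^2=F$, $G^2=G$, $FG = 0$. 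The identity $F^2=F$ forces $c = 0$; the identity $FG=0$ forces $p = -a$ and $q = -aw$; and the identity $G^2=G$ gives $q = pw$, which is consistent with the previous equations. The remaining relations ($GF=0$, $H^2=H$, $FH = HF = GH = HG = 0$) can then be checked to hold automatically.

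At this point $F$, $G$, and $H$ are parametrized by exactly three free scalars $a$, $b$, $w \in \cc$. Setting $r \coloneqq a$, $s \coloneqq b$, and $t \coloneqq -w$, a direct comparison with the defining matrices of $\mc{D}_{rst}$ obtained by taking $(\alpha,\beta,\gamma)$ equal to $(1,0,0)$, $(0,1,0)$, and $(0,0,1)$ respectively shows that $F$, $G$, and $H$ coincide with these three generators of $\mc{D}_{rst}$. Taking linear combinations, I conclude that $U^*\mc{A}U = \mathrm{span}\{F, G, H\} = \mc{D}_{rst}$, which is the claim.

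The main obstacle is purely organizational: no single step is conceptually deep, but the system of equations coming from idempotence, orthogonality, and the sum $F+G+H = I$ must be solved carefully to verify that the residual degrees of freedom are exactly the three scalars $r,s,t$ appearing in the stated normal form, and that no hidden constraint forces one of them to vanish. Once this bookkeeping is complete, the conclusion follows as in the two preceding lemmas.
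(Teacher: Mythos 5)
Your argument is correct and follows essentially the same route as the paper's proof: conjugate the three rank-one idempotents spanning $\mc{D}$, simultaneously upper-triangularize them by a unitary, and use idempotence, mutual annihilation, and the fact that they sum to $I$ to reduce to three free parameters identified with $r$, $s$, $t$. One small wording fix: rather than ``composing $U$ with a permutation unitary'' (which need not preserve upper-triangularity), simply re-index the idempotents $E_{ii}^\prime$ --- as the paper does --- so that the $i$-th one has its diagonal $1$ in position $(i,i)$.
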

	
	\begin{proof}
	
		If $\mc{D}$ is written with respect to the standard basis $\{e_1,e_2,e_3\}$ for $\cc^3$, then $\mc{D}$ is spanned by $\left\{E_{11},E_{22},E_{33}\right\}$ where $E_{jj}=e_j\otimes e_j^*$.	Let $S$ be an invertible element of $\mm_3$ such that $\mc{A}=S^{-1}\mc{D}S$. Clearly $\mc{A}$ is spanned by $\left\{E_{11}^\prime,E_{22}^\prime,E_{33}^\prime\right\}$ where $E_{jj}^\prime=S^{-1}E_{jj}S$.
		
		Observe that the matrices $E_{jj}^\prime$ commute, so there is a unitary $U\in\mm_3$ such that $U^*E_{jj}^\prime U$ is upper triangular for each $j\in\{1,2,3\}.$ Furthermore, since each $U^*E_{jj}^\prime U$ is an idempotent of rank $1$, and $$(U^*E_{ii}^\prime U)(U^*E_{jj}^\prime U)=\delta_{ij} U^*E_{jj}^\prime U\smallskip$$ for all $i$ and $j$, one may re-index the matrices $E_{jj}^\prime$ if necessary to write
		$$\begin{array}{cccc}
		U^*E_{11}^\prime U=\begin{bmatrix}
		1 & x_{12} & x_{13}\\
		0 & 0 & 0\\
		0 & 0 & 0
		\end{bmatrix}, & U^*E_{22}^\prime U=\begin{bmatrix}
		0 & y_{12} & y_{12}y_{23}\\
		0 & 1 & y_{23}\\
		0 & 0 & 0
		\end{bmatrix}, & \text{and} & U^*E_{33}^\prime U=\begin{bmatrix}
		0 & 0 & z_{13}\\
		0 & 0 & z_{23}\\
		0 & 0 & 1
		\end{bmatrix}
		\end{array}$$
	for some $x_{ij}$, $y_{ij}$, and $z_{ij}$ in $\cc$. The fact that these matrices add to $I$ implies that $$\begin{array}{cccc}
	y_{12}=-x_{12}, & y_{23}=-z_{23}, & \text{and} & z_{13}=-x_{13}-x_{12}z_{23}
	\end{array}.$$
	As a result, $$U^*\mc{A}U=\mathrm{span}\left\{\begin{bmatrix}
	1 & x_{12} & x_{13}\\
	0 & 0 & 0\\
	0 & 0 & 0
	\end{bmatrix},\begin{bmatrix}
	0 & -x_{12} & x_{12}z_{23}\\
	0 & 1 & -z_{23}\\
	0 & 0 & 0 
	\end{bmatrix},\begin{bmatrix}
	0 & 0 & -x_{13}-x_{12}z_{23}\\
	0 & 0 & z_{23}\\
	0 & 0 & 1
	\end{bmatrix}\right\}=\mc{D}_{rst},$$
	where $r\coloneqq x_{12}$, $s\coloneqq x_{13}$, and $t\coloneqq z_{23}$.
	\end{proof}	
	\smallskip
	
	\begin{thm}\label{no algebra similar to D is projection compressible}
	
		For any $r,s,t\in\cc$, the algebra $\mc{D}_{rst}$ as in Lemma~\ref{unitary orbit for bad algebra D} is not projection compressible. Consequently, no algebra similar to $\mc{D}$ is projection compressible.
	
	\end{thm}

	\begin{proof}
	
		Consider the elements $A$ and $B$ of $\mc{D}_{rst}$ given by  
		$$\begin{array}{ccc}
		A=\begin{bmatrix}
		1 & r & s\\
		0 & 0 & 0\\
		0 & 0 & 0
		\end{bmatrix} & \text{and} & B=\begin{bmatrix}
		0 & -r & rt\\
		0 & 1 & -t\\
		0 & 0 & 0
		\end{bmatrix}.
		\end{array}$$	
	We wish to construct a matrix $P$ that is a multiple of a projection in $\mm_3$, and such that $(PAP)(PBP)$ does not belong to $P\mc{D}_{rst}P$. To accomplish this goal, choose elements $k,m\in\rr\setminus\{0\}$ subject to the following constraints:
	$$\left.\begin{array}{rrrrrrr}
	\,\,\,\,\phantom{\text{and}} & tk& & & \neq & 1,\\
	\,\,\,\,\phantom{\text{and}} & & & rm & \neq & 1,\\
	\,\,\,\,\phantom{\text{and}} & sk&+&m & \neq & -r,&\,\,\,\,\text{and}\\
	\,\,\,\,\phantom{\text{and}} & k&-&(rt+s)m & \neq & -t.
	\end{array}\right.\smallskip$$ 
	
	\noindent Of course, such $k$ and $m$ always exist. Using these values, define 
	$$P=\begin{bmatrix}
	k^2+1 & -m & -mk\\
	-m & k^2+m^2 & -k\\
	-mk & -k & m^2+1
	\end{bmatrix}.$$
	It is straightforward to check that $\frac{1}{k^2+m^2+1}P$ is a projection in $\mm_3$.
	
	Suppose to the contrary that $(PAP)(PBP)$ were an element of $P\mc{D}_{rst}P$. In this case, there is a matrix 
	$$C=\begin{bmatrix}
		\alpha_0 & r(\alpha_0-\beta_0) & s(\alpha_0-\gamma_0)-rt(\gamma_0-\beta_0)\\
		 0 & \beta_0 & t(\gamma_0-\beta_0)\\
		 0 & 0 & \gamma_0
\end{bmatrix}\in\mc{D}_{rst}$$
such that $PAPBP-PCP=(g_{ij})$ is equal to $0$. We will obtain a contradiction by examining particular entries $g_{ij}$.

Firstly, one may check that 
$$0=g_{31}-kg_{21}=km(k^2+m^2+1)(tk-1)(\beta_0-\gamma_0).\smallskip$$
By construction, the product on the right-hand side is zero if and only if $\beta_0=\gamma_0$. But if this is the case, then $$0=kg_{23}-g_{33}=\beta_0(k^2+m^2+1),\smallskip$$
and therefore $\beta_0=\gamma_0=0.$ Direct computations then show that 
\begin{align*}
(r(k^2+m^2)-sk-m)g_{21}-(k^2&-skm-rm+1)g_{22}\\
&=km(k^2+m^2+1)(rm-1)(sk+m+r)(k-(rt+s)m+t).
\end{align*}
Since $g_{21}=g_{22}=0$ while the right-hand side of this equation is non-zero by construction, we obtain the required contradiction. 

Thus, $(PAP)(PBP)$ does not belong to $P\mc{D}_{rst}P$, so $\mc{D}_{rst}$ is not projection compressible. The final claim now follows from Lemma~\ref{unitary orbit for bad algebra D}.
	\end{proof}
\smallskip
	
	\section{Conclusion}	
	
	Combining Theorems~\ref{no algebra similar to B is projection compressible}, \ref{no algebra similar to Cr is projection compressible theorem}, and \ref{no algebra similar to D is projection compressible} with Theorem~\ref{summarizing thm on compressibility in M3} and its subsequent corollaries, we obtain the following classification of unital subalgebras of $\mm_3$ that admit one, and hence both of the compression properties.
	
	\begin{thm}\label{main thm for compressibility in M3}
	
		If $\mc{A}$ is a unital subalgebra of $\mm_3$, then the following are equivalent:
		\begin{itemize}
			\item[(i)]$\mc{A}$ is projection compressible;
			
			\item[(ii)]$\mc{A}$ is idempotent compressible;
			
			\item[(iii)] $\mc{A}$ is the unitization of an $\mc{LR}$-algebra, or $\mc{A}$ is not $3$-dimensional;
	
	\item[(iv)] $\mc{A}$ is not singly generated by an invertible nonderogatory matrix;
			
			
			\item[(v)]$\mc{A}$ is the unitization of an $\mc{LR}$-algebra, or $\mc{A}$ is transpose similar to one of the algebras from \linebreak Theorem~\ref{summarizing thm on compressibility in M3}(i).

		\end{itemize}		
	
	\end{thm}

%
%
%
%
%
%
%
%
%
%
	
	The fact that the set of projection compressible and idempotent compressible subalgebras of $\mm_3$ (and as will be shown in \cite{ZCramerCompressibility}, of $\mm_n$ for all $n\geq 4$) coincide is rather surprising.  As mentioned in the introduction, despite a considerable amount of effort, we have been unable to provide a direct proof of this fact that does not involve characterizing each class of algebras. Such a proof might shed further light on why these algebras have the particular structures described above.\\
	
\section*{Acknowledgements} \noindent The authors would like to thank Janez Bernik and Bamdad Yahaghi for many helpful conversations.\\

	\bibliography{CMR1Bib}
	\bibliographystyle{plain}

	\vspace{0.1cm}
		
	\Addresses
	
\end{document}